\newcounter{casenum}
 \newcommand\restrict[1]{\raisebox{-.5ex}{$|$}_{#1}}
\newcommand{\Li}{\tilde{\mathcal{L}}^{(i)}}
\newcommand{\R}{\mathbb{R}}
\newcommand{\A}{\mathbb{A}}
\newcommand{\Q}{\mathbb{Q}}
\newcommand{\C}{\mathbb{C}}
\newcommand{\Ak}{\mathbb{A}_{\mathbb{C}}^k}
\newcommand{\X}{\mathcal{X}}
\newcommand{\x}{\tilde{\mathcal{X}}}
\newcommand{\blX}{\tilde{X}}
\newcommand{\PC}{\mathbb{P}_{\mathbb{C}}}
\DeclareMathOperator{\mult}{mult}
\theoremstyle{plain}
\newtheorem{thm}{Theorem}
\newtheorem*{thm*}{Theorem}
\newtheorem{lem}[thm]{Lemma}
\newtheorem{cor}[thm]{Corollary}
\newtheorem*{cor*}{Corollary}
\newtheorem{conj}[thm]{Conjecture}
\theoremstyle{definition}
\newtheorem{dfn}[thm]{Definition}
\newtheorem{rem}[thm]{Remark}
\theoremstyle{remark}
\let\amsamp=&
\gdef\pampmatrix{%
  \begingroup
  \let&=\amsamp
  \begin{pmatrix}%
}
\gdef\endpampmatrix{\end{pmatrix}\endgroup}
\title{K\"ahler packings of projective complex manifolds.}
\author{Aeran Fleming}
\date{\today}
\begin{document}
\maketitle

\begin{abstract}
 
In this note we show that the multipoint Seshadri constant determines the maximum possible radii of embeddings of
K\"ahler balls and vice versa.
\end{abstract}

\section{Introduction.}
Seshadri constants were first introduced by Demailly in \cite{Dem} as a way of studying local positivity 
of ample line bundles at a given point of a variety. Since then, Seshadri constants have 
attracted substantial attention in the field of algebraic geometry and can be used to reformulate many classical ideas. A nice example of this is the famous conjecture of Nagata on plane algebraic curves, which in its original form is as follows:
\begin{conj}
Let $P_1,\ldots, P_k$ be points of $\mathbb{P}^2$ in general position and $m_1,\ldots,m_k$ be positive integers. Then for $k \geq 9 $ any curve $C \in \mathbb{P}^2$ of degree $d$ passing each point $P_i$ with multiplicity 
$m_i$ must satisfy 
\[ d \geq \frac{1}{\sqrt{k}} \sum_{i=1}^k m_i.\]

\end{conj}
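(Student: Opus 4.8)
The first thing to record is that this statement is Nagata's conjecture, which is open in general; rather than claim a complete argument I will describe the natural lines of attack, what they achieve, and precisely where they stall. The reformulation closest to the theme of this note is through the multipoint Seshadri constant $\varepsilon = \varepsilon(\mathbb{P}^2,\mathcal{O}(1);P_1,\ldots,P_k)$. The conjectured inequality $d \geq \frac{1}{\sqrt k}\sum_i m_i$ is exactly the assertion that $\varepsilon = \frac{1}{\sqrt k}$ for $k$ general points with $k \geq 9$. One bound is automatic: if $t$ is such that $\pi^*H - t\sum_i E_i$ is nef on the blow-up, then its self-intersection $1 - kt^2$ is nonnegative, forcing $\varepsilon \leq \frac{1}{\sqrt k}$. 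Thus the entire content is the lower bound $\varepsilon \geq \frac{1}{\sqrt k}$, equivalently the nonexistence of curves violating the degree bound.

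The plan would be to argue by contradiction: suppose an irreducible reduced curve $C$ of degree $d$ passes through the $P_i$ with multiplicities $m_i$ and $d\sqrt k < \sum_i m_i$. On $\pi:\tilde{X}\to\mathbb{P}^2$ the strict transform has class $\tilde{C} = d\pi^*H - \sum_i m_i E_i$, and applying Cauchy--Schwarz to the hypothesis already forces $\tilde{C}^2 = d^2 - \sum_i m_i^2 < 0$. A negative self-intersection irreducible curve is rigid, so the strategy is to show that no such curve can occur through $k$ general points once $k \geq 10$. Concretely one plays the adjunction inequality $\tilde{C}^2 + \tilde{C}\cdot K_{\tilde{X}} \geq -2$, with $K_{\tilde{X}} = -3\pi^*H + \sum_i E_i$, against the semicontinuity of the dimensions of the relevant linear systems as the points specialize. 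For $k$ a perfect square this can be pushed through: Nagata's original theorem (the case $k \geq 16$ a perfect square) degenerates the $k$ general points to a highly symmetric configuration on which enough auxiliary curves can be written down explicitly to force the numerical impossibility.

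The genuine obstacle is that the constraints from adjunction and Cauchy--Schwarz are necessary but far from sufficient: when $k$ is not a perfect square they admit numerical solutions, so no purely arithmetic contradiction exists, and one is forced to control the actual effective geometry of the systems $|dH - \sum_i m_i P_i|$ as the points move. The standard tool---degenerating the general configuration to a special one (points on a line, or a Ciliberto--Miranda degeneration of the plane) and invoking semicontinuity---requires showing that the limit linear system has the \emph{expected} dimension, i.e.\ that no unexpected sections are created in the limit. Controlling this jumping of cohomology is exactly where every known approach to the general case breaks down; it is the content of the still-open SHGH conjecture, of which Nagata's conjecture is a special case.

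Given the packing viewpoint of this note, the reformulation I would actually pursue is the symplectic one. Through the correspondence between Seshadri constants and ball packings established in the sequel, Nagata's conjecture with equal multiplicities is equivalent to the assertion that $(\mathbb{P}^2,\omega_{\mathrm{FS}})$ admits a full K\"ahler packing by $k$ equal balls for every $k \geq 9$, so any full-packing construction obtained by the methods developed below would feed directly back into this problem. I would therefore treat the conjecture not as something to be proved here, but as the motivating endpoint toward which the packing results of this note are aimed.
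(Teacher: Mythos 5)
This statement is Nagata's conjecture, which the paper states without proof as motivation, and you correctly identify it as open rather than attempting a proof. Your Seshadri-constant reformulation ($\varepsilon = \frac{1}{\sqrt{k}}$ for $k \geq 9$ general points) is exactly the paper's own restatement in its second conjecture, so your treatment matches the paper's.
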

Using multipoint Seshadri constants this can be rewritten as:
\begin{conj}
Let $P_1, \ldots, P_k$ be points of $\mathbb{P}^2$ in general position then for $k \geq 9$ the multipoint Seshadri constant 
\[\epsilon(\mathbb{P}^2,\mathcal{O}_{\mathbb{P}^2}(1),P_1,\ldots,P_k) = \frac{1}{\sqrt k}.\] 

\end{conj}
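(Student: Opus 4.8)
The plan is to split the claimed equality into the two inequalities $\epsilon \le 1/\sqrt k$ and $\epsilon \ge 1/\sqrt k$, after first recasting the Seshadri constant on the blow-up of $\mathbb{P}^2$ at the $k$ points. Writing $\pi : \tilde X \to \mathbb{P}^2$ for that blow-up, with exceptional divisors $E_1,\dots,E_k$ and $H = \pi^*\mathcal{O}_{\mathbb{P}^2}(1)$, I would use the standard reformulation
\[
\epsilon(\mathbb{P}^2,\mathcal{O}_{\mathbb{P}^2}(1),P_1,\dots,P_k)
= \sup\Big\{\, t \ge 0 : H - t\textstyle\sum_{i=1}^k E_i \ \text{is nef}\,\Big\}
= \inf_{C}\ \frac{\deg C}{\sum_{i=1}^k \mult_{P_i} C},
\]
the infimum running over irreducible plane curves $C$ meeting at least one $P_i$. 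In this language the content of the statement is that the ray $H - t\sum E_i$ stays nef exactly up to the value $t = 1/\sqrt k$ at which its self-intersection drops to zero.

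The inequality $\epsilon \le 1/\sqrt k$ is elementary and is the part I expect can be settled unconditionally. Since a nef divisor on a smooth projective surface has nonnegative self-intersection, nefness of $H - t\sum E_i$ forces
\[
0 \le \Big(H - t\sum_{i=1}^k E_i\Big)^2
= H^2 + t^2\Big(\sum_{i=1}^k E_i\Big)^2
= 1 - k t^2,
\]
using $H^2 = 1$, $H\cdot E_i = 0$ and $E_i\cdot E_j = -\delta_{ij}$. Hence $t \le 1/\sqrt k$, and this bound holds for every $k \ge 1$; the hypothesis $k \ge 9$ plays no role here.

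The reverse inequality $\epsilon \ge 1/\sqrt k$ is where the real difficulty lies, and here I would reduce it to Conjecture 1 rather than attempt it directly. By the curve description above, $\epsilon \ge 1/\sqrt k$ says precisely that every irreducible curve $C$ of degree $d$ with multiplicities $m_i = \mult_{P_i} C$ satisfies $d \ge \frac{1}{\sqrt k}\sum_i m_i$; since an arbitrary effective curve is a sum of irreducible components and the inequality is linear in $(d,m_1,\dots,m_k)$, this is exactly the bound asserted in Conjecture 1. Thus Conjecture 1 and Conjecture 2 are equivalent, each being the lower bound that complements the free upper bound above.

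The main obstacle is therefore genuine and unavoidable: the lower bound is Nagata's conjecture itself, which is established only in special cases — notably when $k$ is a perfect square, by Nagata's original specialization argument, and for the boundary value $k = 9$, where $3H - \sum E_i = -K_{\tilde X}$ is nef on the rational elliptic surface $\tilde X$ — and it remains open for general $k \ge 10$. A complete proof would have to rule out all hypothetical curves whose ratio $d/\sum m_i$ dips below $1/\sqrt k$, and the most promising avenues I am aware of are degeneration and specialization of the point configuration (in the spirit of Ciliberto--Miranda) and the symplectic packing obstructions of McDuff--Polterovich and Biran, the latter being especially natural given the K\"ahler-packing viewpoint developed in this note.
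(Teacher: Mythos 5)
You were asked to prove what is, in the paper, Conjecture 2: the Seshadri-constant reformulation of Nagata's conjecture. The paper offers no proof of it --- it is introduced only with the phrase ``can be rewritten as,'' i.e.\ as an assertion of equivalence with Conjecture 1 --- so there is no argument in the paper to compare yours against, and you were right not to manufacture a complete proof of an open problem. Your analysis is the appropriate response, and its two unconditional ingredients are correct. The upper bound is sound: nefness of $H - t\sum_{i=1}^k E_i$ forces $(H - t\sum_{i=1}^k E_i)^2 = 1 - kt^2 \ge 0$, hence $\epsilon \le 1/\sqrt{k}$ for every $k$, general position not needed. The equivalence with Conjecture 1 is also sound, and it actually fills in the step the paper leaves implicit: degree and the multiplicities $\mult_{P_i}$ are additive over irreducible components, so the bound $d \ge \frac{1}{\sqrt{k}}\sum_i m_i$ for irreducible curves (which is what the nef criterion on the blow-up requires) implies it for arbitrary effective curves, and conversely; note this uses that the paper states Nagata with the non-strict inequality, which is exactly the form equivalent to $\epsilon = 1/\sqrt{k}$, whereas Nagata's original conjecture asserts strict inequality for $k \ge 10$ and is strictly stronger.

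Two caveats worth recording. First, general position is essential for the lower bound even in the known cases: for special configurations (e.g.\ collinear points) the Seshadri constant genuinely drops below $1/\sqrt{k}$, so any argument for $\epsilon \ge 1/\sqrt{k}$ must use the position hypothesis somewhere, which is one reason the problem is hard. Second, your list of known cases is accurate ($k$ a perfect square by Nagata's specialization, and $k=9$ via nefness of $-K_{\tilde{X}} = 3H - \sum_{i=1}^9 E_i$), and your remark that symplectic/K\"ahler packing obstructions are the natural avenue here is precisely the point of this paper: Theorem 1 converts the conjecture into the statement that $k$ disjoint K\"ahler balls of radius arbitrarily close to $k^{-1/4}$ pack into $(\mathbb{P}^2, \omega_{FS})$, but it does not resolve it.
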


Motivated by work of Biran \cite{Biran}, who proved a symplectic analagon between Nagata's conjecture formulated as a packing problem, Eckl proved in \cite{Eckl} that Nagata's conjecture is in fact equivalent to a more 
restricted packing problem, namely a K\"ahler packing problem. More generally in dimension 2 there is a direct correspondence between sizes of multi-ball K\"ahler packings and multipoint Seshadri constants.  A similar result was obtained by David Witt-Nystrom in \cite{Witt} but this time for a variety of any dimension blown up at a single point.  The aim of this note is to generalise these results to projective complex manifolds of arbitrary dimension blown up at any number of points (in general position or not). During the writing of this paper a similar result was achived by Trusiani in \cite{Trusiani} however there are some differences in the formulation of the statement and its proof. 
In more details: 
\begin{dfn}
Let $(X,\omega)$ be an $n$ dimensional K\"ahler manifold with K\"ahler form $\omega$. Then a holomorphic embedding 
\[ \phi = \coprod_{q=1}^k \phi_q \colon \coprod_{q=1}^k B_0(r_q) \to X\]
is called a K\"ahler embedding of $k$ disjoint complex flat balls in $\C^n$ centered in $0$, of radius 
$r_q$, if there exists a K\"ahler form $\omega'$ such that $[\omega']=c_1(L)$ and 
 $\phi_q^*(\omega')= \omega_{std}$ is the standard K\"ahler form on $\C^n$ restricted to $B_0(r_q).
$ Let $\gamma_k(X,\omega;P_1,\ldots,P_n)=\sup\{ r > 0 \colon $ That there exists a K\"ahler packing 
with $ \phi_q(0)=P_q\}$. We call $\gamma_k$ the $k-$ball packing constant.  
\end{dfn}
Since curvature is an invariant under open holomorphic embeddings we cannot define the packing condition
$\phi^*_q(\omega)=\omega_{std}$ using the original K\"ahler form $\omega$ on $X$ as long as that
form is not flat enough around $q$. However the following theorem shows that under suitable conditions it is possible to find a K\"ahler form flat enough around $q$ in the cohomology class of $\omega$ as requested in the definition.

\begin{thm}\label{Main}
With the notation as above we have that
the square of the $k-$ball packing constant is equal to the  multipoint Seshadri constant:
\[ \gamma_k(X,\omega;P_1,\ldots,P_n) = \sqrt{\epsilon(X,L;P_1,\ldots,P_n)}.\]

\end{thm}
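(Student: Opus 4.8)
The plan is to interpret both sides via K\"ahler geometry on the blow-up $\pi\colon \tilde X\to X$ of $X$ at $P_1,\ldots,P_k$, with exceptional divisors $E_1,\ldots,E_k$, and to exploit the characterization of the (symmetric) multipoint Seshadri constant as a nef threshold, $\epsilon(X,L;P_1,\ldots,P_k)=\sup\{t\ge 0 : \pi^*L - t\sum_i E_i \text{ is nef}\}$. The bridge between the two sides is the K\"ahler analogue of McDuff's symplectic blow-up/blow-down: a K\"ahler embedding of $k$ disjoint standard balls of radius $r$ centred at the $P_q$ should correspond to a K\"ahler form on $\tilde X$ lying in the class $\pi^*c_1(L)-r^2\sum_i[E_i]$, where the exponent $2$ (hence the square root in the statement) reflects that the relevant weight on $E_i$ is the capacity $\sim r^2$ of the ball rather than its radius. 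I would prove the theorem as the two inequalities $\gamma_k^2\le\epsilon$ and $\gamma_k^2\ge\epsilon$, matching these two descriptions.

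For $\gamma_k^2\le\epsilon$: given a K\"ahler packing with $\phi_q^*\omega'=\omega_{std}$, $[\omega']=c_1(L)$ and radius $r$, I would remove the images $\phi_q(B_0(r))$ and glue in the exceptional divisors, using that each ball is biholomorphic to a standard flat ball to produce a genuine K\"ahler metric $\tilde\omega$ on $\tilde X$. Computing areas on the lines $\ell\subset E_i$ (where $\pi^*L\cdot\ell=0$ and $E_i\cdot\ell=-1$) identifies its class as $\pi^*c_1(L)-r^2\sum_i[E_i]$. Since this class contains a K\"ahler form it is a K\"ahler, hence nef, class, so $r^2\le\epsilon$; taking the supremum over packings gives $\gamma_k^2\le\epsilon$.

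For $\gamma_k^2\ge\epsilon$: I would fix $0<t<\epsilon$ and first show that $\pi^*L-t\sum_i E_i$ is ample. This follows because $\pi^*L-t_0\sum_i E_i$ is ample for sufficiently small $t_0>0$, $\pi^*L-t'\sum_i E_i$ is nef for $t<t'<\epsilon$, and the ample cone is the interior of the nef cone, so the whole open segment $t\in(0,\epsilon)$ lies in the ample cone. Thus there is a K\"ahler form $\tilde\omega$ on $\tilde X$ with $[\tilde\omega]=\pi^*c_1(L)-t\sum_i[E_i]$. I would then blow down, modifying $\tilde\omega$ near each $E_i$ so that the push-forward is flat on a ball of radius $\sqrt t$ about $P_q$; this yields a form $\omega'$ with $[\omega']=c_1(L)$ admitting a K\"ahler packing of radius $\sqrt t$, so $\gamma_k\ge\sqrt t$ for all $t<\epsilon$, i.e. $\gamma_k^2\ge\epsilon$.

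The main obstacle is the local analytic construction in both directions, and in particular arranging in the blow-down that the resulting form is honestly flat (equal to a standard ball form under a biholomorphism), not merely symplectically standard, while keeping it positive and in the prescribed cohomology class. This requires a careful gluing of K\"ahler potentials with cut-off functions, controlling the complex Hessian so that positivity is preserved across the transition region; this is precisely the step that upgrades the purely symplectic packing correspondence of Biran to the K\"ahler setting, and where the flatness hypothesis in the definition of $\gamma_k$ is genuinely used.
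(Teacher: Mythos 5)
Your inequality $\gamma_k^2\le\epsilon$ follows the same route as the paper: a K\"ahler packing of radius $r$ yields, via the symplectic/K\"ahler blow-up construction, a K\"ahler form on $\tilde X$ in the class $\pi^*c_1(L)-r^2\sum_i[E_i]$, which is then nef, forcing $r^2\le\epsilon$. That half is fine and is exactly what the paper does.

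The gap is in the hard direction $\gamma_k^2\ge\epsilon$. You start from a K\"ahler form $\tilde\omega$ on $\tilde X$ with $[\tilde\omega]=\pi^*c_1(L)-t\sum_i[E_i]$ and propose to ``modify $\tilde\omega$ near each $E_i$ so that the push-forward is flat on a ball of radius $\sqrt t$.'' This step cannot be carried out by a cut-off gluing of potentials, and no mechanism you describe replaces it. Near $E_i$ one can write $\tilde\omega=\omega_{\mathrm{mod}}+\frac{i}{2\pi}\partial\bar\partial\phi$, where $\omega_{\mathrm{mod}}$ is the standard model on $\mathrm{Bl}_0\mathbb{C}^n$ whose blow-down is flat on a ball of capacity $t$ (the two forms are cohomologous on a neighbourhood of $E_i$), but $\phi$ is an arbitrary smooth function with no smallness whatsoever. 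Replacing $\phi$ by $\chi\phi$ with a cut-off $\chi$ creates error terms involving $\partial\chi\wedge\bar\partial\phi$, $\partial\phi\wedge\bar\partial\chi$ and $\phi\,\partial\bar\partial\chi$ on the transition annulus, which are of the same order as $\phi$ and $d\phi$ there; positivity of the glued form fails in general, precisely because curvature obstructs any holomorphic identification of a neighbourhood of $E_i$ carrying $\tilde\omega$ with the standard model. Such gluing only works when the two potentials are uniformly close, and in the paper this closeness is not assumed but \emph{produced}: Theorem \ref{thmA} constructs explicit trivializing sections of a degeneration $\tilde{\mathcal{X}}_i$ whose central fibre contains an exceptional component $(\mathbb{P}^n_{\mathbb{C}}, m_i\omega_{FS})$, flat balls of radius arbitrarily close to $\sqrt{m_i/d_i}$ are embedded there using the explicit embedding $(B_r(0),\omega_{std})\hookrightarrow(\mathbb{CP}^n,\omega_{FS})$ for $r<1$, and these balls are then transported to nearby fibres $\cong X$, where the relevant potentials differ by an arbitrarily small amount because they come from the same family of sections (step (3) of the proof of Theorem \ref{thmB}). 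As written, your strategy would only produce flat balls of some small, unquantified radius rather than radius $\sqrt t$; recovering the sharp radius is exactly the content of the paper's degeneration machinery, and likewise of Witt-Nystr\"om's and Trusiani's arguments, neither of which proceeds by direct local modification of an arbitrary K\"ahler form near $E_i$.
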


\section{Multipoint Seshadri constants.}

In this section we start by recalling some facts and definitions related to Seshadri constants. Single point Seshadri constants have been quite well studied and a good introduction to them can be found 
in Chapter 5 of \cite{Laz} and \cite{primer}. As we are interested in multipoint Seshadri constants we will introduce the notation and some ideas needed later. 

Let $X$ be a complex projective manifold of dimension $n$, with $P_1, 
\ldots, P_k$ distinct points of $X$. Let $L$ be an ample divisor and 
$\tilde{X} \coloneqq Bl_{P_1,\dots , P_k}(X) \xrightarrow{\pi} X $ be the blow-up of $X$ at 
the points $P_1, 
\ldots, P_k$. Let $\pi^{-1}(P_i) = E_i$ denote the exceptional divisor corresponding to the 
point $P_i$ for all $i=1,\ldots , k$ and for $\epsilon_i \in \Q_{>0}$ set 
\[ \tilde{L} \coloneqq \pi^*L-\sum_{i=1}^k \epsilon_i E_i \text{ for } \epsilon_i \in \mathbb{Q}_{>0} .\] If $L$ is a nef $\mathbb{Q}$-Cartier divisor then \cite{Laz} defines the multipoint Seshadri constant to be 
\[\max\{\epsilon \geq0 | \pi^*L-\sum \epsilon_iE_i \text{ is nef }\}.\] Since we require $L$ ample to achieve the desired K\"ahler packing we 
show that there is an equivalent definition where we replace $\max$ with $\sup$ and $\geq 0$ with $ >0$. The following lemma's prove that the two definitions are equivalent filling a gap in the literature. 

\begin{lem}\label{PAG}
 $\pi^*L-\epsilon'E$ is ample on $\tilde{X} \xrightarrow{\pi} X$ if
 $0<\epsilon' < \epsilon(X,L;P)$.
\end{lem}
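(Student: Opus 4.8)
The plan is to exploit the fact that, by the definition of the Seshadri constant as a \emph{maximum} over nef twists, the divisor $\pi^*L-\epsilon E$ is nef for $\epsilon=\epsilon(X,L;P)$ itself (nefness being a closed condition on $N^1(\tilde{X})_{\R}$, the maximum is attained). The strategy is then to realise $\pi^*L-\epsilon'E$ as a convex combination of this nef class with a genuinely ample divisor of the same shape, and to invoke the standard principle that a strictly positive combination of an ample class and a nef class is again ample.

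First I would record the elementary positivity input for blow-ups: since $-E$ is relatively ample for $\pi$, the standard fact (see \cite{Laz}) that $m\pi^*L-E$ is ample for all $m\gg0$ gives, after dividing by $m$, that $\pi^*L-\delta E$ is ample for all sufficiently small $\delta>0$. Because $\epsilon'>0$ I may fix one such $\delta$ with $0<\delta<\epsilon'$.

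Next I would write the interpolation explicitly. Setting $t:=\frac{\epsilon-\epsilon'}{\epsilon-\delta}$, one checks that
\[ \pi^*L-\epsilon'E = t\,(\pi^*L-\delta E) + (1-t)\,(\pi^*L-\epsilon E),\]
and the inequalities $\delta<\epsilon'<\epsilon$ force $0<t<1$. The first summand is a positive multiple of an ample class and the second a nonnegative multiple of a nef class, so $\pi^*L-\epsilon'E$ is ample by the ``ample $+$ nef $=$ ample'' principle (the ample classes are the interior of the nef cone, which is preserved under adding a nef class).

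The only genuine content lies in the blow-up positivity fact of the first step; the remainder is a soft convexity argument, so I expect the point requiring care to be the justification that $\pi^*L-\delta E$ is ample for small $\delta$, i.e. the relative ampleness of $-E$ for $\pi$. A minor but important point is that $\epsilon'$ and $\epsilon$ need not be rational, so the whole argument should be carried out for $\R$-divisors, where both cones and the convexity statements remain valid.
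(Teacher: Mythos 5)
Your proof is correct, and it takes a genuinely different route from the paper's. The paper proves the lemma by verifying Seshadri's criterion on $\tilde{X}$ directly: from ampleness of $L$ it extracts a uniform positive lower bound $\epsilon_L$ on $\frac{L\cdot C}{\mult_Q C}$ over all points and curves in $X$, and then runs a case analysis over pairs $(\tilde{Q},\tilde{C})$ on the blow-up ($\tilde{Q}\notin E$, split according to the size of $\frac{\mult_P C}{\mult_Q C}$; $\tilde{Q}\in E$ with $\tilde{C}\not\subset E$; $\tilde{C}\subset E$), bounding the Seshadri quotient of $\pi^*L-\epsilon'E$ below by a positive constant in each case; nefness of $\pi^*L-\epsilon_P E$ at the Seshadri constant itself enters inside one of those cases, exactly as in your first step. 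Your argument replaces all of the curve-by-curve estimates with cone theory: nefness at $\epsilon$ (closedness of the nef cone), ampleness of $\pi^*L-\delta E$ for a single small $\delta>0$ (relative ampleness of $-E$ together with the standard fact that $m\pi^*L-E$ is ample for $m\gg 0$), the interpolation identity $\pi^*L-\epsilon'E = t(\pi^*L-\delta E)+(1-t)(\pi^*L-\epsilon E)$ with $t=\frac{\epsilon-\epsilon'}{\epsilon-\delta}\in(0,1)$, and Kleiman's theorem that the ample cone is the interior of the nef cone, so that ample plus nef is ample. The inputs you quote are not shallower than Seshadri's criterion, but your proof buys several things: it is shorter; it is insensitive to the sign and multiplicity bookkeeping where the paper's write-up in fact has small slips (in the case $\tilde{C}\subset E$ the correct lower bound is $\epsilon'$ rather than $1$, since $E\cdot\tilde{C}=-\deg\tilde{C}$); it is valid verbatim for real $\epsilon'$, as you note; and it generalizes with no change to the multipoint constant, which the paper must handle separately afterwards in Lemmas \ref{ample} and \ref{lem2}. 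What the paper's hands-on approach buys in exchange is an explicit uniform Seshadri bound on the blow-up, obtained without invoking the cone machinery.
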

\begin{proof}
By Seshadri's Criterion \cite[Thm 1.4.13]{Laz} if $L$ is ample then there exists $\epsilon_L > 0$ such that for all
points $Q \in X$ and all irreducible curves $C$ containg $Q$ we have that 
$\frac{L\cdot C}{\mult_QC} > \epsilon_L$. Let $\tilde{Q} \in \tilde{X}$ such that
 $\pi(\tilde{Q})=Q$ and let $\tilde{C} \subset \tilde{X}$ be an irreducible curve with 
 $\tilde{Q} \in \tilde{C}$. Then there are the following cases.
  If $Q \not\in E$ then \[\frac{(\pi^*L-\epsilon'E)\cdot \tilde{C}}
 {\mult_{\tilde{Q}}\tilde{C}}= \frac{L \cdot C}{\mult_QC}-\epsilon'\frac{\mult_PC}{\mult_QC} \geq
 \frac{\epsilon_L}{2}\text{ if } \frac{\mult_PC}{\mult_QC} <
  \frac{\epsilon_L}{2\epsilon'}.\] On the other hand
   if  $\frac{\mult_PC}{\mult_QC} \geq
  \frac{\epsilon_L}{2\epsilon'}$  then
  \[\frac{(\pi^*L-\epsilon'E)\cdot \tilde{C}}
 {\mult_{\tilde{Q}}\tilde{C}}=\frac{(\pi^*L-\epsilon_PE)\cdot \tilde{C}}
 {\mult_{\tilde{Q}}\tilde{C}}+(\epsilon_P-\epsilon')\frac{E\cdot \tilde{C}}{\mult_{\tilde{Q}}
 \tilde{C}} \geq (\epsilon_P-\epsilon')\frac{\mult_PC}{\mult_QC} \geq (\epsilon_P-\epsilon')
 \frac{\epsilon_L}{2\epsilon'}.\]If $\tilde{Q} \in E$ with $\tilde{C} \not\in E$ then $\mult_{\tilde{Q}}\tilde{C} \leq \mult_P \pi(\tilde{C})$ and 
 
  \[\frac{(\pi^*L-\epsilon'E)\cdot \tilde{C}}
 {\mult_{\tilde{Q}}\tilde{C}} \geq \frac{(\pi^*L-\epsilon'E)\cdot \tilde{C}}
 {\mult_{P}\pi(\tilde{C})} \geq \epsilon_P -\epsilon' > 0.\] 
Finally if $\tilde{Q} \in E$ and $\tilde{C} \subset E$ then 
  \[ \frac{(\pi^*L-\epsilon'E)\cdot \tilde{C}}
 {\mult_{\tilde{Q}}\tilde{C}}\geq - \frac{\epsilon'E\cdot\tilde{C}}{\mult_{\tilde{Q}}\tilde{C}} \geq 1.\]
  The final inequality appears since $E\cdot \tilde{C} = \deg\tilde{C}$ on $E \cong \mathbb{P}^{n-1}$
 and $\deg\tilde{C} \geq \mult_{\tilde{Q}} \tilde{C}$. Hence we have shown that if $0 < \epsilon' < \epsilon_P$ then $\pi^*L-\epsilon'E$ is $\mathbb{Q}$-ample.

\end{proof}
\smallskip
\begin{cor}
$\max\{\epsilon \geq 0 :\pi^*L-\epsilon E$ is nef $\}=\sup\{\epsilon >0 : \pi^*L-\epsilon E$ is 
$\mathbb{Q}$-ample$\}$.\qed
\end{cor}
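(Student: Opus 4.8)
The plan is to deduce the corollary directly from Lemma~\ref{PAG} together with the elementary fact that ampleness implies nefness; essentially no new geometric input is needed. Write $\epsilon_{\mathrm{nef}} := \max\{\epsilon \geq 0 : \pi^*L - \epsilon E \text{ is nef}\}$, which by definition is the Seshadri constant $\epsilon(X,L;P)$, and $\epsilon_{\mathrm{amp}} := \sup\{\epsilon > 0 : \pi^*L - \epsilon E \text{ is } \Q\text{-ample}\}$. The goal is to show $\epsilon_{\mathrm{nef}} = \epsilon_{\mathrm{amp}}$, and I would do this by establishing the two inequalities separately.

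For $\epsilon_{\mathrm{amp}} \leq \epsilon_{\mathrm{nef}}$ I would simply observe that every $\Q$-ample divisor is nef, so for each $\epsilon > 0$ such that $\pi^*L - \epsilon E$ is $\Q$-ample, the value $\epsilon$ already lies in the set defining $\epsilon_{\mathrm{nef}}$; taking the supremum over all such $\epsilon$ then gives the inequality. Here it is worth recording that the maximum in the definition of $\epsilon_{\mathrm{nef}}$ is genuinely attained: by Kleiman's theorem the nef cone is the closure of the ample cone, hence closed, and testing against irreducible curves through $P$ shows the set of admissible $\epsilon$ is bounded above, so its supremum is a maximum.

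For the reverse inequality $\epsilon_{\mathrm{nef}} \leq \epsilon_{\mathrm{amp}}$ I would feed the conclusion of Lemma~\ref{PAG} in directly: for every rational $\epsilon'$ with $0 < \epsilon' < \epsilon(X,L;P) = \epsilon_{\mathrm{nef}}$ the divisor $\pi^*L - \epsilon' E$ is $\Q$-ample, so each such $\epsilon'$ belongs to the set over which $\epsilon_{\mathrm{amp}}$ is taken, whence $\epsilon_{\mathrm{amp}} \geq \sup\{\epsilon' : 0 < \epsilon' < \epsilon_{\mathrm{nef}}\} = \epsilon_{\mathrm{nef}}$. Combining the two inequalities completes the proof. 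I do not expect any real obstacle once Lemma~\ref{PAG} is available; the only subtlety is the boundary value $\epsilon = \epsilon_{\mathrm{nef}}$, where $\pi^*L - \epsilon E$ is nef but typically sits on the boundary of the ample cone and so fails to be ample. The argument is arranged so that this endpoint is never required to be ample, since the strict inequality in Lemma~\ref{PAG} supplies exactly the open interval $(0,\epsilon_{\mathrm{nef}})$, whose supremum already equals $\epsilon_{\mathrm{nef}}$.
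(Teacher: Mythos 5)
Your proposal is correct and follows exactly the route the paper intends: the corollary is stated with a bare \qed as an immediate consequence of Lemma~\ref{PAG}, and your two inequalities (every $\mathbb{Q}$-ample divisor is nef for one direction; Lemma~\ref{PAG} applied on the open interval $(0,\epsilon_{\mathrm{nef}})$, whose supremum is $\epsilon_{\mathrm{nef}}$, for the other) are precisely the routine details the paper suppresses. Your added remark that the maximum on the nef side is genuinely attained, via closedness of the nef cone and boundedness from curves through $P$, is a worthwhile clarification but does not change the argument.
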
 

\begin{lem}\label{ample}
There exists $\epsilon_1,\ldots,\epsilon_k > 0 $ such that $\tilde{L}$ is $\Q$-ample for all 
$i=1,\ldots,k$. 
\end{lem}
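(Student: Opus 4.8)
The plan is to realise the simultaneous blow-up $\tilde X$ as a tower of single-point blow-ups and then apply Lemma \ref{PAG} one factor at a time. Since $P_1,\ldots,P_k$ are \emph{distinct} points of $X$, blowing them up at once is the same as blowing them up successively: set $X_0 \coloneqq X$ and, for $1 \le j \le k$, let $X_j \coloneqq Bl_{P_j}(X_{j-1})$, where $P_j$ now denotes the unique point of $X_{j-1}$ over $P_j \in X$. This point is a smooth point lying off every previously created exceptional divisor, precisely because the $P_i$ are distinct. Write $\sigma_j \colon X_j \to X_{j-1}$ for the $j$-th blow-down and $\pi_j \coloneqq \sigma_1 \circ \cdots \circ \sigma_j$, so that $X_k = \tilde X$ and $\pi_k = \pi$.

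I would then induct on $j$, producing rationals $\epsilon_1,\ldots,\epsilon_j > 0$ for which $\tilde L_j \coloneqq \pi_j^* L - \sum_{i=1}^j \epsilon_i E_i$ is $\Q$-ample on $X_j$, where $E_i$ is the exceptional divisor over $P_i$. The base case $j = 0$ is the ampleness of $L$. For the step, $X_{j-1}$ is a smooth projective manifold and $\tilde L_{j-1}$ is $\Q$-ample by hypothesis, so Lemma \ref{PAG} applies to the single blow-up $\sigma_j$ of $X_{j-1}$ at $P_j$: any rational $\epsilon_j$ with $0 < \epsilon_j < \epsilon(X_{j-1}, \tilde L_{j-1}; P_j)$ makes $\sigma_j^* \tilde L_{j-1} - \epsilon_j E_j$ $\Q$-ample. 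The range is nonempty because the Seshadri constant of an ample class at a point is strictly positive, which is exactly the content of Seshadri's criterion already invoked in Lemma \ref{PAG}.

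It then remains to match this class with $\tilde L_j$. Pulling back gives $\sigma_j^* \tilde L_{j-1} = \pi_j^* L - \sum_{i=1}^{j-1} \epsilon_i\, \sigma_j^* E_i$, and since $P_j$ is disjoint from the exceptional loci over $P_1,\ldots,P_{j-1}$, the morphism $\sigma_j$ is an isomorphism near each $E_i$ with $i < j$; hence $\sigma_j^* E_i$ is again the exceptional divisor over $P_i$ on $X_j$ and receives no contribution from $E_j$. Therefore $\sigma_j^* \tilde L_{j-1} - \epsilon_j E_j = \pi_j^* L - \sum_{i=1}^j \epsilon_i E_i = \tilde L_j$, closing the induction; taking $j = k$ produces positive $\epsilon_1,\ldots,\epsilon_k$ with $\tilde L$ $\Q$-ample.

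The step I expect to be the main (indeed the only real) obstacle is the geometric bookkeeping of the previous paragraph: one must verify that distinctness of the points forces the exceptional divisors created at different stages to be mutually disjoint, so that the iterated pullbacks introduce no cross terms and the tower $X_k$ genuinely reconstructs the simultaneous blow-up $\tilde X$ together with its divisor $\tilde L$. Everything else is a mechanical iteration of Lemma \ref{PAG}.
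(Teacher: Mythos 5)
Your proposal is correct and follows essentially the same route as the paper: the paper's proof also realises $\tilde{X}$ as a tower of single-point blow-ups and iteratively applies Seshadri's criterion together with Lemma \ref{PAG} to choose each $\epsilon_j$. Your version simply makes explicit the bookkeeping (that distinctness of the points keeps the earlier exceptional divisors untouched at each stage) which the paper leaves implicit.
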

\begin{proof}
By Seshadri's criterion and Lemma \ref{PAG}
we know there exist some $\epsilon_1 > 0$ such that $\pi^*L-\epsilon_1 E$ is $\mathbb{Q}$-ample
on $\tilde{X}_1=Bl_{P_1}(X)$. Choosing a second point $P_2 \in X$  Seshadri's criterion again ensures the existence of some $\epsilon_2 > 0$
such that $\pi^*L-\epsilon_1E_1-\epsilon_2E_2$ is $\mathbb{Q}$-ample on $\tilde{X}_2=Bl_{P_1,P_2}(X)$. Arguing iteratively for 
$k$ points of $X$ we obtain the claim of the lemma. 
\end{proof}

From the statement of the above lemma it is not obvious that we can chose all the $\epsilon_i$ to be 
equal. This follows from the next lemma.

\begin{lem}\label{lem2}
If $\pi^*L-\sum_{i=1}^k \epsilon_iE_i$ is $\mathbb{Q}$-ample then $\pi^*L-\sum_{i=1}^k \epsilon'_iE_i$
is ample if $ 0<\epsilon'_i\leq \epsilon_i$, for all $i=1,\ldots,k$.
\end{lem}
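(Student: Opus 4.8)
The plan is to prove the statement by lowering the coefficients $\epsilon_i$ to $\epsilon_i'$ \emph{one index at a time}, at each stage writing the new divisor as a convex combination of the previous ($\mathbb{Q}$-ample) divisor and an explicit nef divisor, and then invoking the standard fact that the sum of an ample class and a nef class is again ample (see, e.g., \cite{Laz}). Concretely, I would induct on $0 \le m \le k$ with the inductive hypothesis that
\[ A^{(m)} \coloneqq \pi^*L - \sum_{i \le m}\epsilon_i' E_i - \sum_{i > m}\epsilon_i E_i \]
is $\mathbb{Q}$-ample. The base case $m=0$ is precisely the hypothesis of the lemma, and the case $m=k$ is the desired conclusion.

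For the inductive step, passing from $A^{(m)}$ to $A^{(m+1)}$ amounts to lowering only the coefficient of $E_{m+1}$ from $\epsilon_{m+1}$ to $\epsilon_{m+1}'$. Setting $\tau = \epsilon_{m+1}'/\epsilon_{m+1} \in (0,1]$ and letting $B \coloneqq A^{(m)} + \epsilon_{m+1}E_{m+1}$ be the class obtained by deleting the $E_{m+1}$-term, a direct check gives the identity $A^{(m+1)} = \tau A^{(m)} + (1-\tau)B$. Since $\tau > 0$ and $A^{(m)}$ is ample, it then suffices to prove that $B$ is nef: for then $(1-\tau)B$ is nef and the sum is ample.

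The heart of the argument, and the step I expect to require the most care, is verifying that $B = \pi^*L - \sum_{i\le m}\epsilon_i' E_i - \sum_{i > m+1}\epsilon_i E_i$ is nef. I would emphasise here why one cannot simply lower all the coefficients at once by a single convex combination: the complementary class then has the shape $\pi^*L - \sum \mu_i E_i$ with all $\mu_i \ge 0$, and such a class typically fails to be nef against a curve passing through several of the points $P_i$ with high multiplicity. Working one index at a time avoids this, and I would test $B$ against an arbitrary irreducible curve $C \subset \tilde{X}$, using that a divisor is nef exactly when it has nonnegative degree on every irreducible curve. If $C \subset E_{m+1}$, then $\pi^*L \cdot C = 0$ (since $\pi(C)$ is a point) and $E_i \cdot C = 0$ for $i \ne m+1$ (the exceptional divisors are pairwise disjoint), while the coefficient of $E_{m+1}$ in $B$ is $0$; hence $B \cdot C = 0$. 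If $C \not\subset E_{m+1}$, then $E_{m+1}\cdot C \ge 0$, and writing $B \cdot C = A^{(m)}\cdot C + \epsilon_{m+1}(E_{m+1}\cdot C)$ the first term is strictly positive by ampleness of $A^{(m)}$ while the second is nonnegative, so $B \cdot C > 0$. In either case $B \cdot C \ge 0$, so $B$ is nef, which completes the inductive step and hence the proof.
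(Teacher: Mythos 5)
Your proof is correct, but it takes a genuinely different route from the paper's. The paper reduces to the single-point case: it shows, via Seshadri's criterion and an intersection computation with strict transforms, that ampleness of $\pi^*L-\sum_{i=1}^k \epsilon_iE_i$ on $Bl_{P_1,\ldots,P_k}(X)$ descends to ampleness of $\pi_{k-1}^*L-\sum_{i=1}^{k-1}\epsilon_iE_i$ on the blow-up at the first $k-1$ points, and then invokes its earlier single-point lemma (Lemma \ref{PAG}) on the last blow-up to lower the final coefficient; the argument thus moves between different blow-ups and leans on the uniform Seshadri bound $\epsilon_{\pi^*L-\sum\epsilon_iE_i}$ furnished by ampleness. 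You instead never leave $\tilde{X}$: you lower one coefficient at a time, writing each new class as the convex combination $A^{(m+1)}=\tau A^{(m)}+(1-\tau)B$ with $B=A^{(m)}+\epsilon_{m+1}E_{m+1}$, verify nefness of $B$ by the straightforward curve dichotomy ($C\subset E_{m+1}$ gives degree $0$ since $\pi^*L$ and the other disjoint exceptional divisors meet $C$ trivially; $C\not\subset E_{m+1}$ gives positive degree from ampleness of $A^{(m)}$ plus $E_{m+1}\cdot C\ge 0$), and finish with the standard fact that ample plus nef is ample. Your version buys self-containedness and transparency: it avoids Seshadri's criterion, Lemma \ref{PAG}, and all multiplicity bookkeeping with strict transforms, working purely with the cone structure in $N^1(\tilde{X})$; your observation that one cannot lower all coefficients simultaneously by a single convex combination (the complementary class $\pi^*L-\sum\mu_iE_i$ need not be nef) correctly identifies why the one-index-at-a-time decomposition is the right move. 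The paper's route, while messier (and marred by some index slips in its displayed computation), stays within the Seshadri-constant machinery it has already set up and directly relates the constants on successive blow-ups, which fits the narrative of the surrounding section.
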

\begin{proof}
 Lemma \ref{PAG} proves the case when $k=1$. If we show that $\pi^*L-\sum_{i=1}^k \epsilon_i E_1$ ample implies $\pi_{k-1}^*L-\sum_{i=1}^{k-1}\epsilon_i E_1$ is ample, where $\pi_{k-1}
 \colon \tilde{X} \to X$ denotes the blow up of $X$ at the first $k-1$ points, then  we can simply apply Lemma \ref{PAG} on the last blow up. Let $C \subset X$ be an irreducible curve and 
 $\bar{C} \subset \tilde{X}$ be the strict transform of $C$. Then
 \begin{align*}
 \frac{(\pi^*L-\sum_{i=1}^{k-1} \epsilon_i E_1) \cdot C}{\mult_QC}
 &=\frac{(\pi_{k-1}^*L-\sum_{i=1}^{k-1} \epsilon_i E_1)\cdot \bar{C}}{\mult_Q C}\\
 &= \frac{(\pi^*L-\sum_{i=1}^{k} \epsilon_i E_1)}{\mult_Q C} + \frac{\epsilon_iE_i \bar{C}}
 {\mult_QC} \\
 &= \frac{(\pi^*L-\sum_{i=1}^{k} \epsilon_i E_1)}{\mult_Q C}+\epsilon_i \cdot \frac{\mult_{P_k}C}
 {\mult_QC}\\
 &\geq \epsilon_{\pi^*L-\sum_{i=1}^k \epsilon_iE_i}.
 \end{align*} 
 Thus Seshadri's Criterion implies the ampleness of $\pi_{k-1}^*L-\sum_{i=1}^{k-1}\epsilon_i E_i$.

\end{proof}

\begin{dfn}\label{seshadri}
For $X$ and $L$ defined as above the multipoint Seshadri constant associated to points $P_1,\ldots P_k$
of X is 
\[ \epsilon(X,L;P_1,\ldots,P_k)=\text{ sup}\{\epsilon\in\mathbb{Q}_{>0} \colon \pi^*L-\sum_{i=1}^k
\epsilon_i
E_i \text{ is } \mathbb{Q}\text{-ample}\}.\]
\end{dfn}\noindent The multipoint Seshadri constant associated to an ample divisor is always $ > 0$ by Lemma \ref{lem2}.

 \medskip
 \section{Degeneration of complex projective manifolds to multipoint blow up.} 
 First we fix some general notation and introduce some constructions that will be 
used later. The notation fixed in this section will be used for the remainder of the report unless 
otherwise stated.

Let $X$ and $L$ be defined as above and consider the product manifold 
\begin{figure}[H]
\begin{center}
\begin{tikzcd}
\mathcal{X}\coloneqq X \times \Ak  \arrow{r}{p} \arrow{d}{q} & \X \\ \Ak
\end{tikzcd}
\end{center}
\label{Family}
\end{figure}
\noindent Then if $t_1,\ldots,t_k$ represent coordinates of $\Ak$ and set $Z_i \coloneqq \{P_i\} \times \{t_i=0\}$. These are exactly the 
coordinate hyperplanes cut out by $t_i=0$ over the points $P_i$. 
\begin{rem}
For all $1 \leq i < j \leq k$ we have $Z_i \cap 
Z_j = \emptyset$ since the points $P_i
\in X$ are distinct.
\end{rem}
By blowing up the family $\X$ over the union of all the centres $Z_i$ we obtain a new algebraic 
family over $\mathbb{A}^k_{\mathbb{C}}$.
\begin{figure}[H]
\begin{center}
\begin{tikzcd}
 \x \coloneqq Bl_{\cup_{i=1}^k Z_i}(\mathcal{X}) \arrow{r}{\Pi} \arrow[swap]{dr}{(q\circ \Pi)} & \X 
 \arrow{d}{q} \arrow{r}{p} & X \\
     & \Ak
\end{tikzcd}
\end{center}
\label{Fam}
\end{figure}
\noindent such that the exceptional divisor corresponding to the centre $Z_i$ is  
$ \mathcal{E}_i \coloneqq \Pi^{-1}(Z_i)$.
Set $\tilde{\mathcal{L}}_{dm_i,\ldots,m_k}=d\Pi^*p^*_{1}L-\sum_{i=1}^k
m_i\mathcal{E}_i $ a divisor on $\x$. 
\begin{rem}
\begin{enumerate}
\item $\x$ is a well defined complex manifold projective over $\Ak$.
\item$\mathcal{E}_i \cong Z_i \times \PC^n$
\item $(q \circ \Pi)^{-1}\big((t_1,\ldots ,t_k)\big)$ is the blowup of $X$ in the points $P_i$ where $t_i=0$.
\end{enumerate}
\end{rem}
Running through an appropriate path through the 
parameter space $\Ak$ these blow ups accumulate iteratively. Let $\delta_1,\ldots ,\delta_k $ be positive real numbers
and define lines in $\Ak$,
\[l_i \coloneqq \{(0,\ldots,0,t\cdot \delta_i,\delta_{i+1},\ldots,\delta_k) \colon t \in \R\}
\] consisting of points whose first $(i-1)$ coordinates are zero, the $i$-th coordinate is of the form $t\cdot 
\delta_i$ and the remaining $(i+1)$ coordinates are constant. For the point $t^{(i)}=
(0,\ldots,0,\delta_i,\delta_{i+1},\ldots,\delta_k) \in l_i$ for $i=1,\ldots, k$, the preimage is 
\[(q\cdot \Pi)^{-1}(t^{(i)})=Bl_{P_1,\ldots,P_{i-1}}(X) \cup \bigcup_{j=1}^{i-1}\mathcal{E}_j \cap
(q\cdot \Pi)^{-1}(t^{(j)}).\]
Following the line from $l_i$ from $t^{(i)}$ to $t^{(i+1)}$ we trace a path through the parameter space such that the preimage of $t^{(1)}$ is simply $X$, the preimage of $t^{(1)}$ is simply the blow up of $X$ at $P_1$ with some contributions from the exceptional divsior and so on until we get that the preimage of $t^{(k)}=(0,\ldots,0)$ is 
the blow up of $X$ at $P_k$ with contributions from all exceptional divisors. See Figure \ref{parameter}
for a visualisation.

\begin{figure}[h]
\centering     
\begin{subfigure}[t]{0.45\textwidth}         
 \centering          
 \resizebox{\linewidth}{!}{             
  \begin{tikzpicture} 
  [scale=1.6]
\fill[blue] (2,0) circle [radius=0.4mm];
\node at (2.1,-0.1) {$0$};
\draw [->](2,0) to (3.5,0);
\draw [->](2,0) to (2,1.5);

\draw[dashed] (1,0.5)--(4,0.5);
\draw[red,thick] (2,0.5) to(3,0.5);
\draw[blue,thick] (2,0.5) to (2,0);
\fill[red](2,0.5)circle[radius=0.4mm];
\node at (1.8,0.65) {$t^{(1)}$} ;
\fill(3,0.5)circle [radius=0.4mm];
\node at (3.1,0.65) {$t^{(0)}$};
\node at (0.7,0.6){$l_1$};
\node at (2,1.7) {$t_2$};
\node at (3.75,0) {$t_1$};
\end{tikzpicture}          }          
\caption{Parameter space for k=2}        
  \end{subfigure}     
  \begin{subfigure}[t]{0.45\textwidth}      
  \centering          
  \resizebox{\linewidth}{!}{              
  \begin{tikzpicture}   
[scale=1.7]
\fill [green](2,0) circle [radius=0.4mm];
\draw [->](2,0) to (3.25,1.25);
\draw [->](2,0) to (3.5,0);
\draw [->](2,0) to (2,1.5);

\draw[red,thick] (2.25,0.5) to (3,0.5);
\fill[red](2.25,0.5) circle [radius=0.4mm];
\node at (2.3,0.65) {$t^{(1)}$};

\draw[blue,thick] (2,0.25) to (2.25,0.5);
\draw[dashed] (1.75,0) to (3,1.25);
\node at (3,1.35) {$l_2$};
\fill[blue](2,0.25) circle [radius=0.4mm];
\node at(1.8,0.3) {$t^{(2)}$};
\draw[green,thick] (2,0.25) to(2,0);
\draw[dashed] (1,0.5)--(4,0.5);
\fill(3,0.5)circle [radius=0.4mm];
\node at(3.1,0.65) {$t^{(0)}$};
\node at (2.1,-0.1) {$0$};
\node at (4,0) {$t_1$};
\node at (2,1.7) {$t_2$};

     \end{tikzpicture}          }          
  \caption{Parameter space for k=3}               
  \end{subfigure}      
  \caption{}   
  \label{parameter}  
  \end{figure}

Similarly for $l_i \cong \A_{\C}^1$ we have 
\[ (q\cdot \Pi)^{-1}(l_i) = \Big( \bigcup_{j=1}^{i-1}\mathcal{E}_j \cap (q\cdot \Pi)^{-1}(l_i)\Big)
\cup \x_i.\]
Where $\mathcal{E}^{(i)}=\mathbb{P}^n_{\C}$ and $\x_i$ is the blow up of $\blX_{i-1} \times l_i$ in the
point $(P_i,0)$.
\begin{figure}[H]
\begin{center}
\begin{tikzcd}
 \x_i \arrow{r}{\Pi_i}\ar[bend left=30]{rr}{\Pi^{(i)}} & \X_i = \blX_{i-1} \times l_i \arrow{d}{p_{i-1}} \arrow{r}{q_{i-1}} & l_i \cong \A^1_{\C} \\
     & \blX_{i-1}

\end{tikzcd}
\end{center}
\label{Family x_i}
\end{figure}
On $\x_i$ there exists a family of divisors
\[ \tilde{\mathcal{L}}^{(i)}_{d,m_1,\ldots,m_i} \coloneqq  d\Pi^{(i) *} p_{i-1}^*
\tilde{L}^{i-1}_{d;m_1,\ldots,m_i}-m_i \mathcal{E}_i^{(i)}\]
\noindent where $\mathcal{E}_i^{(i)} \cong \mathbb{P}^n_{\C}$ is the exceptional divisor of 
the blow up $\Pi_i$.
Specific divisors associated to a particular fiber are denoted $\mathcal{L}^{(i)}_{d,\underline{m},t}$
, 
where $t$ denotes the parameter on $\A^1_{\C}$ and $d,\underline{m}_i=(m_1,\ldots,m_i)$ record the degree and multiplicity (we will often just write $\Li_t$ for short if the multiplicity and degree are fixed).
\begin{rem}
$ \x_{i,0}= \tilde{X}_i \cup \mathcal{E}_i^{(i)}$, where $ \mathcal{E}_i^{(i)} \cong \PC^n$ and
$\x_{i,t}=\Pi^{(i)\ -1}(t)$ is the fiber over $l_i\cong \A^1_{\C}$. 
Hence there exists positive integers $d, m_j$ such that   
 $\tilde{L}^{(i)}=\tilde{L}^{(i)}_{d;\underline{m}_i} \coloneqq d\pi^*L-\sum_{j=1}^i m_jE_j$ is an ample divisor on $\blX_i$ (and very ample for $d,m_j\gg 0$), if $\frac{m_j}{d} = \epsilon_j$ for all $\epsilon_j$ as in 
Lemma \ref{ample}.
\end{rem}

Now that we have defined a algebraic family $\x_i$ and a family of divisors  
$\tilde{\mathcal{L}}^{(i)}=\tilde{\mathcal{L}}^{(i)}_
{d;\underline{m}_i}$  we would like to investigate the global sections of these divisors restricted to the fibers of $\Pi^{(i)}$, and how these global sections are constructed for different $i$. This is shown by the following 
Theorem:
\begin{thm} \label{thmA}
For all $i=1,\ldots,k$ there exists global sections $\sigma_0^{(i)},\ldots, \sigma_{N_i}
^{(i)}$ of $\Pi^{(i)}_{*}\tilde{\mathcal{L}}^{(i)} $ such that:
\begin{enumerate}
\item $\sigma_0^{(i-1)},\ldots, \sigma_{N_i}
^{(i-1)}$ trivialise $\Pi^{(i)}_{*}\tilde{\mathcal{L}}^{(i)} $.\\
\item If $0_i$ denotes the zero of the line $l_i$ then the sections \newline 
\[\sigma^{(i)}_{0,0_i}\restrict{\mathcal{E}_i^{(i)}},\ldots, \sigma^{(i)}_{{N_i},0_i}\restrict{\mathcal{E}
_i^{(i)}} \]
generate the K\"ahler form $m_i\cdot \omega_{FS}$ on $\mathcal{E}_i
^{(i)}
 \cong \PC^n$.\\
 \item The restricted sections $\sigma_{0,\delta_i}^{(i)},\ldots, \sigma_{N_i,\delta_i}
^{(i)}$ generate the same K\"ahler form on $\x_{i,\delta_i} \cong \blX_{i-1}$ as;\newline
$\sigma^{(i-1)}_{0,0_{i-1}}\restrict{\blX_{i-1}},\ldots, \sigma^{(i-1)}_{N_{i-1},0_{i-1}}\restrict{\blX_{i-1}}$ 
  from the previous family $\x_{i-1}$.
\end{enumerate}

\end{thm}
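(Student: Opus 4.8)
The plan is to run an induction on $i$, exploiting that the pushforward $\Pi^{(i)}_{*}\Li$ is a \emph{globally trivial} bundle on $l_i\cong\A^1_{\C}$ and then normalising a single global frame separately over the two distinguished fibres $t=0_i$ and $t=\delta_i$. First I would fix $d$ and the $m_j$ large enough that the preceding remark applies, so that $\Li$ is relatively very ample for the flat projective morphism $\Pi^{(i)}$ and, by Serre vanishing together with cohomology and base change, $R^q\Pi^{(i)}_{*}\Li=0$ for $q>0$ while $\Pi^{(i)}_{*}\Li$ is locally free of some rank $N_i+1$. Since $l_i\cong\A^1_{\C}$ is affine with coordinate ring the principal ideal domain $\C[t]$, every locally free sheaf on it is free; choosing a basis of the free $\C[t]$-module $H^0(\x_i,\Li)$ produces a global frame $\sigma^{(i)}_0,\ldots,\sigma^{(i)}_{N_i}$, which is assertion $(1)$. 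Base change then guarantees that restricting this frame to any fibre yields a basis of $H^0(\x_{i,t},\Li\restrict{\x_{i,t}})$.

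Next I would identify the two fibre restrictions of $\Li$ that drive $(2)$ and $(3)$. Over the central point the fibre degenerates as $\x_{i,0}=\blX_i\cup_{E_i}\mathcal{E}_i^{(i)}$ with $\mathcal{E}_i^{(i)}\cong\PC^n$; since $\Li$ is the pullback part minus $m_i\mathcal{E}_i^{(i)}$ and the normal bundle of the exceptional divisor restricts to $\mathcal{O}(-1)$, the pullback part is trivial along $\mathcal{E}_i^{(i)}$ and $\Li\restrict{\mathcal{E}_i^{(i)}}\cong\mathcal{O}_{\PC^n}(m_i)$. Over the generic point the fibre is $\x_{i,\delta_i}\cong\blX_{i-1}$ and, as $\mathcal{E}_i^{(i)}$ is absent there, $\Li\restrict{\x_{i,\delta_i}}\cong\tilde{L}^{(i-1)}$, exactly the ample bundle carrying the Kähler data of the previous stage. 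The sections of $\mathcal{O}_{\PC^n}(m_i)$ realise the $m_i$-th Veronese embedding, whose pullback of $\omega_{FS}$ is $m_i\,\omega_{FS}$: this is the geometric input for $(2)$.

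To pin down the actual forms I would use surjectivity of two restriction maps. Applying the ideal-sheaf sequence of the component $\mathcal{E}_i^{(i)}\subset\x_{i,0}$ together with Serre vanishing (for $d,m_i\gg0$) shows $H^0(\x_{i,0},\Li)\twoheadrightarrow H^0(\PC^n,\mathcal{O}(m_i))$, so the frame can be normalised at $t=0$ to restrict on $\mathcal{E}_i^{(i)}$ to the balanced monomial basis, giving the embedding whose pullback is precisely $m_i\,\omega_{FS}$; this is $(2)$. Likewise $H^0(\x_{i-1,0},\tilde{L}^{(i-1)})\twoheadrightarrow H^0(\blX_{i-1},\tilde{L}^{(i-1)})$ lets me regard the previous stage's restricted sections $\sigma^{(i-1)}_{\cdot,0_{i-1}}\restrict{\blX_{i-1}}$ as a basis of $H^0(\blX_{i-1},\tilde{L}^{(i-1)})$, and the same base-change identification makes the $t=\delta_i$ restriction of the new frame a basis of the very same space; matching the new frame at $t=\delta_i$ to this inductive basis forces the two generated forms to coincide, which is $(3)$.

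The main obstacle is that a single global frame must \emph{simultaneously} restrict to the balanced Veronese basis on $\mathcal{E}_i^{(i)}$ at $t=0$ and to the inductive basis on $\blX_{i-1}$ at $t=\delta_i$; since the form generated by a family of sections is invariant only under the group $\C^{*}\,U(N_i+1)$ of scalings and unitaries, a non-unitary regauging of one fibre will in general spoil the normalisation at the other. I would resolve this through the gauge freedom of the trivialisation: two global frames differ by some $g\in\mathrm{GL}_{N_i+1}(\C[t])$, and I must produce one with prescribed values at $t=0$ and $t=\delta_i$ lying in the two admissible cosets. Because $\det$ is surjective on $\C^{*}\,U(N_i+1)$ I may choose coset representatives of equal determinant, reduce to $\mathrm{SL}_{N_i+1}$, and then invoke surjectivity of $\mathrm{SL}_{N_i+1}(\C[t])\to\mathrm{SL}_{N_i+1}\big(\C[t]/(t(t-\delta_i))\big)\cong\mathrm{SL}_{N_i+1}(\C)\times\mathrm{SL}_{N_i+1}(\C)$, valid because $\C[t]$ is Euclidean and $\mathrm{SL}$ is generated by elementary matrices. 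Checking that these restriction maps stay surjective uniformly along the induction, and that the Veronese normalisation yields $m_i\,\omega_{FS}$ on the nose, is where the genuine work sits; the remaining assertions follow formally from flatness, Serre vanishing, and the product structure of $\x_i$ over $l_i\setminus\{0_i\}$.
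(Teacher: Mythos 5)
Your proposal is correct in its essentials but takes a genuinely different route from the paper. The paper explicitly declines the abstract machinery you invoke (it remarks that Grauert semi-continuity plus Quillen--Suslin would give the trivialisation, but that ``instead of using these powerful theorems we describe the trivialising sections directly''): it constructs the frame by hand from a basis of $H^0(X,\mathcal{O}_X(dL))$ adapted to the vanishing orders at all $k$ points (the sets $B_0$, $B_j$, $\tilde B_j$), producing sections of $\Li$ by pulling back along $p_{i-1}$, multiplying by powers of $t$, and subtracting $m_i\mathcal{E}_i^{(i)}$; the fibrewise basis property, the balanced restriction to $\mathcal{E}_i^{(i)}$, and property (3) are then read off from this explicit construction, with (3) arranged by a \emph{descending} iteration $i=k,k-1,\ldots,1$ in which the stage-$i$ frame restricted to $\x_{i,\delta_i}$ is completed to the input basis for stage $i-1$. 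You instead obtain the frame abstractly (flatness, fibrewise cohomology vanishing, base change, freeness over the PID $\C[t]$) and then impose (2) and (3) simultaneously by a two-point gauge-fixing: matching determinants inside the $\C^{*}U(N_i+1)$-cosets and lifting through the surjection $\mathrm{SL}_{N_i+1}(\C[t])\to \mathrm{SL}_{N_i+1}(\C)\times\mathrm{SL}_{N_i+1}(\C)$, which is what allows your induction to run \emph{upwards}; that surjectivity argument is valid (elementary matrices generate over a field and lift entrywise), and it cleanly isolates the linear-algebra freedom that the paper handles implicitly by threading explicit bases through the stages. What your route still owes, and what the paper's explicit dimension count replaces: the fibrewise $H^1$-vanishing on the \emph{reducible} central fibre $\x_{i,0}=\blX_i\cup_{E_i}\mathcal{E}_i^{(i)}$ needed for base change, which requires the normalisation sequence $0\to\Li|_{\x_{i,0}}\to\Li|_{\blX_i}\oplus\Li|_{\mathcal{E}_i^{(i)}}\to\Li|_{E_i}\to0$ together with Serre vanishing and surjectivity of $H^0(\mathcal{E}_i^{(i)})\to H^0(E_i)$. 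Two precisions should also be made explicit: the frame at $0_i$ cannot restrict on $\mathcal{E}_i^{(i)}$ to \emph{only} the balanced monomials, since there are $N_i+1>\binom{m_i+n}{n}$ sections --- it restricts to the balanced monomials together with sections vanishing identically on $\mathcal{E}_i^{(i)}$, which contribute nothing to $\sum_j|\sigma_j|^2$; and the inductive target at $\delta_i$ is likewise a basis-plus-zeros coming from stage $i-1$ (choose the lifts of the $T$-divisible monomials with zero component on the $\blX_{i-1}$ side), so the coset you must hit is that of the nonzero part. With these points spelled out your argument goes through and is, if anything, more modular than the paper's.
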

\begin{proof}
If we show that the dimension of the space of global sections of $\tilde{\mathcal{L}}^{(i)}$ restricted 
to each fiber of $\Pi^{(i)}$ is the same,
Grauert's semi-continuity theorem \cite[III.12]{Har} implies immediately that $\Pi_*^{(i)}\tilde{\mathcal{L}}^{(i)}$ is locally 
free, hence it is free by the Quillen-Suslin Theorem. Thus there exists trivialising global sections of $\Pi^*\tilde{\mathcal{L}}^{(i)}$. But
instead of using these powerful theorems we describe the trivialising sections directly because they allow us to prove the properties of the theorem more easily. For the sections to be trivialsing it is enough to show that they form a basis of $H^0(\tilde{\mathcal{X}}_{i,t},
\tilde{\mathcal{L}}^{(i)}_t)$ then restricted to $\tilde{\mathcal{X}}_{i,t}$ for all $t \in \mathbb{A}^1_{\mathbb{C}}$.

  We start by calculating a basis of sections of $H^0(X,\mathcal{O}_X(dL))$ characterised by their vanishing behaviour at the points $P_1,\ldots,P_k$ so that subsets of this basis can be interpreted as generating sections of
   $H^0(\blX,\tilde{L}^{(i)})$. 
 Let $\mathfrak{m}_{X,P_i}$ denote the maximal ideal of $X$ at $P_i$ and consider the short exact sequence
\[ 0 \to \bigcap^k_{i=1}\mathfrak{m}_{X,P_i}^{m_i+1} \otimes \mathcal{O}_X(dL)\to \mathcal{O}_X(dL) 
\to \bigoplus_{i=1}^k \nicefrac
{\mathcal{O}_X}{\mathfrak{m}^{m_i+1}_{X_i,P_i}} \to 0.\]
Taking the long exact sequence of cohomology w.r.t to the above sequence gives
\begin{figure}[H] 
\centering
\begin{tikzpicture}[descr/.style={fill=white,inner sep=1.5pt}]
        \matrix (m) [
            matrix of math nodes,
            row sep=2em,
            column sep=4em,
            text height=1.5ex, text depth=0.25ex
        ]
        { 0 & H^0(X,\bigcap^k_{i=1}\mathfrak{m}_{X,P_i}^{m_i+1} \otimes \mathcal{O}_X(dL)) & 
        H^0(X,\mathcal \mathcal{O}_X(dL) )
         & H^0(X,\mathcal \bigoplus_{i=1}^k \nicefrac
         {\mathcal{O}_X}{\mathfrak{m}^{m_i+1}_{X,P_i}}) \\
            & H^1(X,\bigcap^k_{i=1}\mathfrak{m}_{X,P_i}^{m_i+1} \otimes \mathcal{O}_X(dL)) & H^1(X,\mathcal{O}_X(dL))) & \ldots \ldots \ldots \ldots \ldots \\
        };

        \path[overlay,->, font=\scriptsize,>=latex]
        (m-1-1) edge (m-1-2)
        (m-1-2) edge (m-1-3)
        (m-1-3) edge (m-1-4)
        (m-1-4) edge[out=355,in=175] node[descr,yshift=0.3ex]{ } (m-2-2)
        (m-2-2) edge (m-2-3)
        (m-2-3) edge (m-2-4);
  
\end{tikzpicture}
\end{figure}
By Serre Vanishing $H^1(X,\bigcap^k_{i=1}\mathfrak{m}_{X,P_i}^{m_i+1} \otimes \mathcal{O}_X(dL))=0$, since applying the projection formula yields
\[ H^1(X,\bigcap^k_{i=1}\mathfrak{m}_{X,P_i}^{m_i+1} \otimes \mathcal{O}_X(dL))= H^1(\blX , 
\mathcal{O}_X(\tilde{L}_{d,m_i})).\] 
and we assume $\tilde{L}_{d,m_i}$ is ample and $d,\underline{m}_k \gg 0$. Hence $\phi 
\colon H^0(X,\mathcal{O}_X(dL)) \to \bigoplus_{i=1}^k
 \sfrac{\mathcal{O}_X}{\mathfrak{m}_{X,P_i}^{m_i+1}}$
 is surjective. This gives a basis of sections of $H^0(X,\mathcal{O}_X(dL))$ which is the union of:
 \begin{itemize}
 \item A basis $B_0$
  of $H^0(X,\bigcap_{i=1}^k \mathfrak{m}^{m_i+1} \otimes \mathcal{O}_X(dL))$, which are sections of $\mathcal{O}
  _X(dL)$ vanishing to multiplicity at least $m_i+1$ in $P_i $ for $i=1,\ldots,k$.
  \item A set $B_i $ of sections  of $\mathcal{O}_X(dL)$ mapped to $0$ in $ \sfrac{\mathcal{O}_X}{\mathfrak{m}
  _{X,P_j}^{m_j+1}}$ for $j \neq i$ and to a basis of homogenous polynomials of degree $\leq m_i$ in 
  $ \sfrac{\mathcal{O}_X}{\mathfrak{m}_{X,P_i}^{m_i+1}}$, in variables given by local coordinates around $P_i$. 
  \item A set of sections $\tilde{B}_i$ of $\mathcal{O}_X(dL)$ as above, but with  homogenous polynomials 
  of degree $=m_i$, and that $\tilde{B}_i \subset B_i$.

 \end{itemize}
 Hence a basis of $H^0(\blX_i,\mathcal{O}_{\blX_i}(\tilde{L}^{(i)}))$ is given by
 \[ B^{(i)}\coloneqq B_0 \cup \bigcup_{j=1}^{i}\tilde{B}_j \cup \bigcup_{j=i+1}^kB_j.\] 
To change between a basis of sections on $X_{i-1}$ and on $X_{i}$ we simply skip those sections which vanish to order 
less than $m_i$  at $P_i$. 

Since $\x_i= \blX_{i-1}\times \A_{\C}^1$ we can use these sections to construct the sections of $H^0(\x_i,\tilde{\mathcal{L}}^{(i)})$ 
using the following procedure:
\begin{enumerate}[Step 1:]
  \item Pull back a section $s \in H^0(\tilde{X}_{i-1},\mathcal{O}_{X_{i-1}}(\tilde{L}^{(i-1)}))$ 
 along $p_{i-1}$  
 to get a section of $p_{i-1}^*\tilde{L}^{(i-1)}=\mathcal{L}^{(i-1)}$
   on $\X_i=\blX_{i-1} \times \A_{\C}^1$.
 \item If  mult$_{P_{i}}s < m_i$ then the section $t^{m_i-\mbox{ mult }_{P_{i}}s}p_{i-1}^*s 
\in H^0(\x_i, \tilde{\mathcal{L}}^{(i-1)})$ has multiplicity 
 $m_i$ in $(P_i,0)$. If mult$_{P_i}s \geq m_i$ then mult$_{(P_i,0)}p_{i-1}^*s \geq m_i$.
 \item In both cases, we can subtract $m_i$ copies of the exceptional divisor $\mathcal{E}_i$ from the 
 pullback of this section along $\Pi_i$, thus obtaining sections of 
 $H^0(\x_{i},\tilde{\mathcal{L}}^{(i)})$. 
\end{enumerate}

When starting with the basis sections  in $B^{(i-1)}$ , restricting to the general fibers 
 $\x_{i,t} \cong \blX_{i-1}$ of $\Pi^{(i)}$ (that is 
$t\neq0$) we get back the sections 
in $B^{(i-1)}$ possibly multiplied with some power of $t$. Thus all the sections $ \sigma_1^{(i-1)}, \ldots \sigma_{N_i}
^{(i-1)}$
obtained from $B^{(i-1)}$ in the way described above are trivializing $\Pi^{(i)}_*\Li$ outside the central fiber. 
To understand the restriction of these sections  to the central fiber 
 we need to analyse 
$\tilde{\mathcal{L}}^{(i)}\restrict{\x_{i,0}}$ and its global sections. 
Note, $\x_{i,0}=\blX_i \cup \mathcal{E}_i^{(i)}$ and $\blX_i \cap \mathcal{E}_i^{(i)}=E_i$. 
Furthermore $\tilde{\mathcal{L}}^{(i)} \restrict{\blX_{i}}=\tilde{L}^{(i)}$, and  
$\tilde{\mathcal{L}}^{(i)} \restrict{\mathcal{E}_{i}^{(i)}}=\mathcal{O}_{\mathcal{E}_i^{(i)}}(-m_i\mathcal{E}_i^{(i)})
\cong \mathcal{O}_{\mathbb{P}^n}(m_i)$ via the 
isomorphism $\mathcal{E}_i^{(i)}\cong \mathbb{P}^n_{\C}$. This means that global sections of $\tilde{
\mathcal{L}}^{(i)}\restrict{\x_{i,0}}$ consist of a section of
$\mathcal{L}^{(i)}\restrict{\blX_{i}}$ and a section of $\mathcal{L}^{(i)}\restrict{\mathcal{E}_i^{(i)}}$ 
coinciding when restricted to $E_i$. 
\begin{itemize}
    \item Global sections of $\tilde{\mathcal{L}}^{(i)}\restrict{\blX_i}$ and their restriction to $E_i$:\newline
      These are all sections of $dL$ on $X$
       which vanish to multiplicity $\geq m_j$ in $P_j$ for $j=1,\ldots,i.$ Equivalently 
       these are global sections of $\tilde{L}^{(i-1)}$ on $\blX_{i-1}$ vanishing in multiplicity $\geq 
       m_i$ in $P_i$. To obtain a section of $\tilde{L}^{(i)}$ from a section of $ \tilde{L}^{(i-1)}$
       we pull back the section along the map $\pi^{(i-1)} \colon \blX_i \to \blX_{i-1}$ then subtract $
       m_i$ copies of the exceptional divisor. 
        If a section of $\Li 
  \restrict{\blX_i} = \tilde{L}^{(i)}$ corresponds to a section of $\tilde{L}^{(i-1)}$ with multiplicity greater than $m_i$ at 
  $P_i$ the restriction of this section to $E_i$ is zero. If the multiplicty is exactly $m_i$ at $P_i$ then   
   $\tilde{\mathcal{L}}^{(i)}\restrict{E_i}\cong\mathcal{O}_{\mathbb{P}^{n-1}}(m_i)$ hence the restriction
    of such a section of $\Li$ to $E_i \cong \mathbb{P}^n_{\C}$
    will be described by a non-zero homogeneous  polynomials of degree $m_i$ in $Y_1,\ldots,Y_n$. 
    \item Global sections of $\tilde{\mathcal{L}}^{(i)}\restrict{\mathcal{E}_i^{(i)}}$
    and their restriction to $E_i$:\newline
 To describe these we introduce homogeneous coordinates $[T:Y_1:\ldots:Y_n] $ on $\mathcal{E}_i^{(i)}$. The 
 coordinates $Y_1,\ldots,Y_n$ come from local coordinates $y_1,\ldots,y_n$ around $P_i$,
  the $T$ coordinate comes from the affine base parameter $t$, and
    $T=0$ describes $E_i \subset \mathcal{E}_i^{(i)}$. Hence the restriction of a section of 
    $\tilde{\mathcal{L}}^{(i)}\restrict{\mathcal{E}_i^{(i)}}$ to $E_i$ is obtained by setting $T=0$. 
 Since $\tilde{\mathcal{L}}^{(i)}\restrict{\mathcal{E}_i^{(i)}} \cong \mathcal{O}_{\mathbb{P}^n}(m_i)$ sections 
 of $\tilde{\mathcal{L}}^{(i)}\restrict{\mathcal{E}_i^{(i)}}$ are non-zero 
  homogeneous polynomials of degree $m_i$ in $T,Y_1,\ldots,Y_n$.

    \end{itemize}
     Sections of $\Li$ are therefore described by  pairs of sections of $ \blX_i$  and 
     $\mathcal{E}_i^{(i)}$  both vanishing on 
   $ E_i$ and pairs of sections of  $\blX_i$  and  $\mathcal{E}_i^{(i)}$ 
    restricting to the same non-zero homogeneous polynomial in $ Y_i,\ldots,Y_n$
     of degree $ m_i $  on $ E_i.$
   A basis  of global sections of $\Li\restrict{\x_{i,0}}$ can therefore be built from the basis of $H^0(\tilde{X}_i, L^{(i)})$ 
and $H^0(\tilde{\mathcal{E}}_i^{(i)}, \tilde{\mathcal{L}}^{(i)}\restrict{\mathcal{E}_i^{(i)}}$ as the set of
    \begin{itemize}
    \item  Pairs of basis sections on $\blX_i$ vanishing on $E_i$ and the zero section on
    $\mathcal{E}_i^{(i)}$,\\
     \item Pairs of the zero section on $\blX_i$ and a basis section on $\mathcal{E}_i^{(i)}$ vanishing on $E_i$,\\
    \item  Pairs of basis sections of  $\blX_i$ and $\mathcal{E}_i^{(i)}$ that restrict to the same 
   non-zero  homogeneous polynomial of degree $m_i$ on $E_i$.  
    \end{itemize}
  
Now we want to show that the restriction of the $\sigma^{(i)}_0 \ldots \sigma^{(i)}_{N_i}$ to $\x_{i,0}$ yields a basis 
of $\Li \restrict{\x_{i,0}}$. To this purpose, we follow steps 1-3 to construct $\sigma _j^{(i)}$ from a section of 
$\tilde{L}_{i-1}$ and then identify the pair of sections describing the restriction to $\x_{i,0}$. 
If a section of $\tilde{L}_{i-1}$ vanishes to multiplicity strictly greater than $m_i$ at $p_i$ then it corresponds to a pair 
of sections on $\Li_{0}$ consisting of a 
section of $\Li$ that restricts to zero on $E_i$, and the zero section on $\mathcal{E}_{i}^{(i)}$. If the section of $
\tilde{L}_{i-1}$ on $\blX_{i-1}$ vanishes to exactly multiplicity $m_i$ at $P_i$ then pulling back and subtracting $m_i$ 
copies of the exceptional divisor we obtain non-zero sections
on $\blX_i$ and on $\mathcal{E}^{(i)}_i$. Such a section of $\tilde{L}_{i-1}$ corresponds 
to a pair of sections on $\x_{i,0}$ that restrict to the same non-zero homogeneous monomial on $E_i$. Finally if 
a section$ s$ of $\tilde{L}_{i-1}$ on $\blX_{i-1}$ vanishes to  multiplicity  strictly less than $m_i$ at $P_i$ then after 
pulling back along $p_{i-1}$ we must multiply by $t^{m_i-\mbox{ mult }_{P_{i}}s}$ before we can subtract copies of 
the exceptional divisor. This type of section corresponds to a pair of sections on $\x_{i,0}$ consisting of the zero 
section on $\blX_{i}$
and a section on $\mathcal{E}_{i}^{(i)}$ that restricts to zero on $E_i$. 

 In terms of the sets $B_0, B_j$ and $\tilde{B}_j$, making up the basis $B^{(i)}$ of $\tilde{L}^{(i)}$ we find that 
a basis of global sections of $\tilde{L}^{(i-1)}$ vanishing to 
multiplicity $> m_i$ at $P_i$ can be written as $B_0 \cup \bigcup^{i-1}_{j=1}\tilde{B}_j \cup \bigcup_{i+1}^k B_j$. 
A basis of global sections vanishing to multiplicity exactly $m_i$ at $P_i$ is $\tilde{B}_i$ and finally a basis of global  
sections vanishing to multiplicity 
less than $m_i$ at $P_i$ is given by $B_i - \tilde{B}_i$. The union of all three basis provides a basis $B^{(i-1)}$ for 
$\x_{i,t} \cong \blX_{i-1}$. The union of the basis corresponding to all sections of $\tilde{L}^{(i-1)}
$ vanishing to multiplicity less than or equal to $m_i$ at $P_i$  is isomorphic to a basis of global sections of $\Li 
\restrict{\mathcal{E}^{(i)}_i}$ where the  identification is given by the 
correspondence between homogeneous polynomials in local coordinates around $P_i$ and homogeneous coordinates 
of $\mathcal{E}^{(i)}_i \cong \mathbb{P}^n_{\C}$.

Thus, the sections $\sigma_0^{(i)},\ldots, \sigma_{N_i}
^{(i)}$  also restrict to a basis of $\Li \restrict{\x_{i,0}}$, hence trivialize $\Pi^{(i)}_*\Li$ over all fibers of the family. 

Now let us construct a K\"ahler form using the trivializing sections of $\Li$ restricted to 
$\mathcal{E}_i^{(i)} \cong \mathbb{P}^n_{\C}$. On this exceptional divisor we can choose homogeneous coordinates 
$T,Y_1,\ldots Y_n$ as above. Then 
\begin{align*}
m_i \cdot\omega_{FS}&=\frac{i}{2\pi}\partial\bar{\partial}m_i\cdot \log(Y_1\bar{Y_1}+
\ldots+Y_n\bar{Y}_n+T\bar{T}) \\ 
       &=\frac{i}{2\pi}\partial\bar{\partial} \log(Y_1\bar{Y}_1+
\ldots+Y_n\bar{Y}_n+T\bar{T})^{m_i} \\
       &=\frac{i}{2\pi}\partial\bar{\partial}\log(Y_1^{m_i}\bar{Y_1}^{m_i}+ m_iY_1^{m_i-1}\bar
       {Y}_1^{m_i-1}Y_2\bar{Y}_2
\ldots) \\ 
       &= m_i\cdot \omega_{FS} = \frac{i}{2\pi}\partial\bar{\partial}\log( \sum_{\alpha+\beta=m_i}
       c_{\alpha ,\beta}Y^{\alpha}
\bar{Y}^{\alpha}T^{\beta}\bar{T}^{\beta}),\  \text{ where } c_{a,b} \text{ are positive integers. }
\end{align*}
If we choose the sections $\sigma_j^{(i)}$ that do not vanish on the exceptional divisor (i.e. coming from sections of $
\tilde{L}^{(i-1)}$ vanishing with multiplicity $\leq m_i$ in $P_i$) such that they restrict to the basis of monomials 
$\sqrt{c_{\alpha,\beta}}Y^{\alpha}T^{\beta}$ of the homogeneous polynomials of degree $m_i$, then $m_i \cdot
\omega_{FS}$ is the K\"ahler form on $\mathcal{E}^{(i)}$ generated by the restriction of these sections $\sigma_{j}
^{(i)}$. Note that $c_{\alpha,\beta}$ is a positive integer, so $\sqrt{c_{\alpha,\beta}}$ is just the usual real square 
root. 

To achieve property (3) we have to construct the trivializing sections of $\Li$ on $\x_i$  iteratively, starting with $i=k
$. On $\x_k$ we construct sections $\sigma_0^{(k)},\ldots,\sigma_{N_k}^{(k)}$ as above, which restricted to 
$\x_{k,\delta_{k}}$ provides a basis of sections of $\tilde{L}^{(k-1)}$ on $\blX_{k-2}$ vanishing with
multiplicity $\geq m_{k-1}$ in $P_{k-1}$. We can complete these sections to a basis of all sections of $\tilde{L}^{(k-2)}$ on
$\blX_{k-2}$ by adding sections which vanish with multiplicity $< m_{k-1}$ in $P_{k-1}$. This basis can be used to construct 
trivializing sections of $\tilde{\mathcal{L}}^{(i)}$ on $\x_{k-1}$ as above, because by its construction it can be split up into the 
subsets $B_0, B_i,\tilde{B}_i$ and $B^{(i)}$. The K\"ahler form on $\mathcal{E}^{(i)}$is also as requested since we can 
choose the completing basis sections of $\tilde{L}^{(k-2)}$  as required above. Iterating this process for $\x_{k-2},
\ldots,\x_{1}$ we deduce property (3) for each $i=1\ldots,k$.  
\end{proof}

\section{K\"ahler packings of projective, complex, manifolds.}

\begin{thm}\label{thmB}
Let $X$ be a projective complex manifold of dimension $n$. $L$ an ample line bundle on $X$ and $P_1,
\ldots, P_k \in X$. Let $\epsilon_0= \epsilon(X;L,P_1,\ldots,P_k)$ denote the multipoint Seshadri 
constant of $L$ on $X$ in $P_1,\ldots,P_k$. Then, for any radius $r < \sqrt{\epsilon_0}$ there exists 
a K\"ahler packing of $k$ flat K\"ahler balls of radius $r$ into $X$. 

\end{thm}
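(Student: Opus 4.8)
The plan is to use Theorem~\ref{thmA} to manufacture, for every rational $r^2 < \epsilon_0$, a K\"ahler form in the class $c_1(L)$ that is flat near each $P_q$ and whose restriction to a small ball around $P_q$ agrees with a multiple of the standard form. Fix rational numbers $\epsilon_q \in \Q_{>0}$ with $r^2 < \epsilon_q < \epsilon_0$; by Definition~\ref{seshadri} and Lemma~\ref{lem2} we may choose $d, m_1,\ldots,m_k \gg 0$ so that $m_q/d = \epsilon_q$ and $\tilde L^{(k)} = d\pi^*L - \sum_q m_q E_q$ is very ample on $\tilde X = \tilde X_k$. The strategy is to transport the Fubini--Study K\"ahler form induced by the sections of Theorem~\ref{thmA} back from the degenerate central fiber to the ordinary blow-up $\tilde X_k$, and then push it down to $X$ by $\pi$; away from the exceptional divisors $\pi$ is an isomorphism, so this yields a K\"ahler form $\omega'$ in the class $\tfrac1d c_1(\tilde L^{(k)})\!\cdot$ (restricted appropriately) which lies in $c_1(L)$.

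The first step is to run the iterative construction of Theorem~\ref{thmA} down to $i=k$, producing trivializing sections $\sigma_0^{(k)},\ldots,\sigma_{N_k}^{(k)}$ of $\Pi^{(k)}_*\tilde{\mathcal L}^{(k)}$. Property~(2) says that on the central exceptional component $\mathcal E_k^{(k)} \cong \PC^n$ these sections restrict to generate precisely $m_k\cdot\omega_{FS}$, and the same holds at every stage $i$ for $\mathcal E_i^{(i)}$. The key observation is that $\mathcal E_i^{(i)} \cong \PC^n$ is a neighbourhood of $E_i$ inside the family, and the affine chart $T \neq 0$ of this $\PC^n$ with coordinates $y_j = Y_j/T$ is exactly a flat chart: there the metric induced by the $\sigma_j^{(i)}$ is $\tfrac{i}{2\pi}\partial\bar\partial \log(1 + \sum_j |y_j|^2)^{m_i}$, which is $m_i$ times the standard form, hence flat to high order at the centre. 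Rescaling coordinates by $1/\sqrt{m_i} = \sqrt{d}/\sqrt{m_i}\cdot\tfrac1{\sqrt d}$ converts this into the standard form $\omega_{std}$ on a ball whose radius is controlled by $\sqrt{\epsilon_q}$.

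The second step is to use Property~(3), the compatibility across the families $\tilde{\mathcal X}_i$, to glue the local flat pictures near all $k$ points into a single global K\"ahler form on $\tilde X_k$. Property~(3) guarantees that the form produced on the general fiber $\tilde{\mathcal X}_{i,\delta_i} \cong \tilde X_{i-1}$ matches the one carried over from the family $\tilde{\mathcal X}_{i-1}$; chaining these isomorphisms from $i=k$ back to $i=1$ produces a single form $\tilde\omega$ on $\tilde X_k$ that is simultaneously flat (equal to $m_q\cdot\omega_{FS}$ in the chart described above) near each $E_q$. Pushing $\tilde\omega$ forward along $\pi$ and dividing by $d$ gives $\omega'$ on $X$ with $[\omega'] = c_1(L)$, flat near each $P_q$. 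Finally, for each $q$ I would define $\phi_q$ on $B_0(r)$ by inverting the flat chart above, composed with $\pi$; since $r^2 < \epsilon_q = m_q/d$, the radius-$r$ ball embeds inside the flat region and $\phi_q^*\omega' = \omega_{std}$, with the $\phi_q(B_0(r))$ disjoint because the $P_q$ are distinct.

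The main obstacle I expect is the gluing in the second step: the local flat forms on the charts near each $E_q$ and the global K\"ahler form on the rest of $\tilde X_k$ need not agree on the overlaps, so one must either patch them with a partition of unity while preserving positivity (which is only automatic if the discrepancy of K\"ahler potentials is suitably small, forcing $d$ large and $r$ strictly below $\sqrt{\epsilon_q}$), or argue that the form coming directly from the global sections $\sigma_j^{(k)}$ already has the prescribed flat behaviour near every centre, so no patching is needed. The strict inequality $r < \sqrt{\epsilon_0}$ is exactly what gives the room to choose the $\epsilon_q$ and $d, m_q$ and to absorb these potential discrepancies.
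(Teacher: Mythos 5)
There is a genuine gap, and it sits at the heart of your plan: the ``push it down to $X$ by $\pi$'' step. If $\tilde\omega$ is a K\"ahler form on the blow-up $\blX_k$ in the class $\frac1d c_1(\tilde L^{(k)})=\pi^*c_1(L)-\sum_q \epsilon_q[E_q]$, then $\pi_*\tilde\omega$ is only a closed positive current on $X$: it is smooth away from the $P_q$, but $\tilde\omega$ restricts to a positive form of positive volume on each $E_q$, and collapsing $E_q$ to the point $P_q$ creates a genuine singularity there, so $\pi_*\tilde\omega$ is not a K\"ahler form in $c_1(L)$. Converting a K\"ahler class of the form $\pi^*c_1(L)-\sum_q\epsilon_q[E_q]$ on the blow-up into a K\"ahler form on $X$ with embedded flat balls of radii $\sqrt{\epsilon_q}$ \emph{is} the blow-down surgery of McDuff--Polterovich \cite{Polt}; assuming it amounts to assuming the theorem (indeed the paper cites it only for the \emph{converse} inequality in Theorem \ref{Main}). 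The paper's proof of Theorem \ref{thmB} is structured precisely to avoid any blow-down: in each family $\x_i$ the target of the packing appears as a \emph{general} fiber $\x_{i,\delta_i}\cong\blX_{i-1}$ (ultimately $\x_{1,\delta_1}\cong X$), the Fubini--Study balls are placed on the central-fiber components $\mathcal E_i^{(i)}-E_i$, and they are then deformed \emph{outward} into nearby fibers by gluing K\"ahler potentials with a partition of unity, $\frac{i}{2\pi}\partial\bar\partial\log(\rho_1 s_i^{(i)}+\rho_2 s_{i,t})$, which stays positive because $s_i^{(i)}-s_{i,t}\to 0$ as $t\to 0$; property (3) of Theorem \ref{thmA} is what lets this be iterated from $i=k$ down to $i=1$ with the balls staying disjoint. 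Your transport goes in the opposite direction (toward $\blX_k$), and you also conflate $\mathcal E_i^{(i)}\cong\PC^n$, which is a component of the central fiber glued to $\blX_i$ along $E_i$, with a neighbourhood of $E_i$ inside $\blX_k$; the form induced on $\mathcal E_i^{(i)}$ does not live on any open subset of $\blX_k$, so it cannot serve as a local model for a form on the blow-up.

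A second, independent error is the flatness claim: $\frac{i}{2\pi}\partial\bar\partial\log\bigl(1+\sum_j|y_j|^2\bigr)^{m_i}$ is $m_i\,\omega_{FS}$ in the affine chart, and the Fubini--Study form is \emph{not} flat on any open set, nor does any rescaling of coordinates make it flat; ``flat to high order at the centre'' is not enough, because the definition of a K\"ahler packing requires $\phi_q^*\omega'=\omega_{std}$ exactly on the whole ball. This is exactly why the paper's step (5) invokes the nontrivial K\"ahler embedding $(B_r(0),\omega_{std})\hookrightarrow(\CP^n,\omega_{FS})$ for $r<1$ from \cite{Eckl} and glues standard flat balls into the Fubini--Study balls, with the radius bound $r<\sqrt{m_i/d_i}$ coming from scaling $m_i\,\omega_{FS}$. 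Your proposal would need both this embedding (to repair the flat-chart claim) and the degeneration-and-gluing mechanism (to replace the impossible pushforward), at which point it becomes the paper's proof.
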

\begin{proof}
First we construct families $\x_i$ as in the previous section. Then we embed Fubini-Study 
K\"ahler balls of large enough volume on $\mathcal{E}^{(i)}_i -E_i$ provided with the K\"ahler
metric $\omega_i^{(i)}$ induced by the global sections of $\Li$ constructed in Theorem \ref{thmA}  
on $\mathcal{E}_i^{(i)}$, 
for $i= k,k-1,\ldots,1$. These balls can be deformed to 
K\"ahler balls on non-central fibers $\x_{i,\delta_i} \cong \blX_{i-1}$, and then iteratively to 
non-central fibers $\x_{j,\delta_j} \cong \blX_{j-1}$ for $j=i-1,\ldots,1$ if the $\delta_j$ are chosen 
small enough. Doing this carefully the deformed balls will not intersect on $\x_{i,\delta_i} \cong X$, so after gluing in standard K\"ahler balls into the Fubini-Study K\"ahler balls we obtain the claim. 
 In more details:
\begin{enumerate}
\item Let $\Delta_{\delta} \subset \A^1_{\C}$ denote the open disk of radius $\delta$, with affine parameter $t$.\ Choose local coordinates $y_1,\ldots,y_n$ of $X$ around $P_i$, so $t,y_1,\ldots,y_n$
are local coordinates around $(0,P_i)$ in $\x_i$. Then over the open subset $\mathcal{U}_t \subset \x_i$ where these coordinates are defined there is a chart of the blow up of $\mathcal{X}_i$ in $(P_i,0)$ with coordinates $t,z_1,\ldots,z_n$ such that the blow up map to $\mathcal{U}_t$ is described by  
$(t,z_1,\ldots,z_n) \mapsto (t,y_1,\ldots,y_n) \coloneqq (t,tz_1,\ldots,tz_n)$. 
Because the coordinates $y_i$ are bounded the central fiber of the induced projection of 
$\mathcal{U}_t$ onto 
$\A^1_{\C}$ is $\mathcal{E}-E \cong \A^n_{\C}$. The non-central fibers are not quite  isomorphic to 
$\A^n_{\C}$  but contain balls $B_R(0)$ with $R$ arbitrarily large close to $t=0$. 
Thus for $R$ arbitrarily large we can find $\delta$ sufficently small and an embedding 
$\iota \colon \Delta_{\delta} \times B_R(0) \hookrightarrow \x_i$. See Figure \ref{product}. \\
\item Choosing in (1) $R$ large enough and $\delta$ small enough implies that 
 there exists embeddings of Fubini-Study K\"ahler balls $B_{R'}(0)$  with $ R' \leq R$, of 
volume arbitrarily close to the volume of $(\mathcal{E}^{(i)}_i, \omega_i^{(i)})$ in all
fibers of $ \Delta_{\delta} \times B_R(0)$ over $t \in \Delta_{\delta}$ with respect 
to the same Fubini-Study K\"ahler form $\omega_i^{(i)}$. \\
\item  By continuity, for $t $ small enough these Fubini-Study forms $\omega_i^{(i)}$
 differ to an arbitrarily small amount from 
the K\"ahler form $\omega_{i,t}$ on $\x_{i,t}$ obtained from the trivializing sections of $\Li$
 pulled back via the embedding $\iota_i$. This allows us to glue in the Fubini-Study 
K\"ahler balls of step (2) into non-central fibers $\x_{i,t}$ provided with the K\"ahler form 
$\omega_{i,t}$ for $t \ll 1 $ small enough.
Assume that $\omega_i^{(i)} = \frac{i}{2\pi}\partial\bar{\partial}\log (s_i^{(i)})$ and 
$\omega_{i,t}= \frac{i}{2\pi}\partial\bar{\partial} \log s_{i,t}$ on $ \{t\} \times B_R(0)$, 
where $s_i^{(i)}, s_{i,t}$ are functions constructed in the usual way from the appropriate sections.
Then choose a partition of unity $(\rho_1,\rho_2)$ such that $\rho_2 \restrict{B_{R'}(0)} \equiv 1$ and 
$\rho_2 \restrict{B_{R'}(0)} \equiv 0$. The glued 2-form $\tilde{\omega}_{i,t}$ is given by 
$\frac{i}{2\pi}\partial\bar{\partial} \log (\rho_1 s_i^{(i)}+\rho_2 s_{i,t})=
\frac{i}{2\pi}\partial\bar{\partial} \log( \rho_1(s_i^{(i)}-s_{i,t})+s_{i,t})$. This form is obviously closed, and it is non-degenerate because $s_i^{(i)}-s_{i,t}$ gets arbitrarily small
for $t \ll 0 $, thus $\rho_1(s_i^{(i)}-s_{i,t})+s_{i,t}$ is arbitrarily close to $s_{i,t}$. Consequently 
$\tilde{\omega}_{i,t}$ is a K\"ahler form. \\

\item The $i^{th}$ Fubini-Study K\"ahler ball on $\x_{i,\delta_i}$ does not intersect 
the $(i+1)^{st},\ldots, k^{th}$ K\"ahler ball constructed before: On the central
fiber $\x_{i,0}$ these balls lie on $\mathcal{E}_i^{(i)}-E_i$ and 
 $\blX_i -E_i \cong \blX_{i-1}-P_i$, so do not intersect. This will not change
 when we deform the balls to $\x_{i,\delta_i}$ if we choose 
$\delta_i$ small enough.\\ 

\item Assume $\int \omega^n_{FS} =1 $, and
$\int_{B_1(0)}\omega_{std}=1$. Then there exists a K\"ahler embedding 
$(B_r(0), \omega_{std}) 
\hookrightarrow (\C\mathbb{P}^n, \omega_{FS})$ , for all $r<1$ (for more details on this embedding see
\cite{Eckl}). Hence for all $r<\sqrt{m_i}$ there exists a 
K\"ahler embedding $(B_r(0), \omega_{std}) \hookrightarrow (\C\mathbb{P}^n, m_i\omega_{FS})$. Rescaling by $d_i$ we obtain a K\"ahler embedding $(B_r(0),\omega_{std}) \hookrightarrow (\C\mathbb{P}^n, 
\frac{m_i}{d_i}\omega_{FS})$
  for all $r< \sqrt{\frac{m_i}{d_i}}$. Since $\omega_i^{(i)}=m_i \omega_{FS}$ the embeddings constructed 
  above imply that 
  $\frac{m_i}{d_i}<\epsilon_0$, but since $\frac{m_i}{d_i}$ can be chosen arbitrarily close to 
  $\epsilon_0$ we can conclude that $\sqrt{\epsilon_0} \leq \gamma_k$, where $\gamma_k$ is the $k-$ball packing constant.

\end{enumerate} 

\end{proof}

\begin{figure}[H]
\centering

\begin{tikzpicture}[scale=0.7]
\draw (0,-3) to (0,3);
\draw (-2,-2) node (v3) {} to (-2,2) node (v1) {};
\draw (2,-2) node (v4) {} to (2,2) node (v2) {};

\draw[dashed]  plot[smooth, tension=.7] coordinates {(v1) (0,2.5) (v2)};
\draw[dashed]  plot[smooth, tension=.7] coordinates {(v3) (0,-2.5) (v4)};
\draw (-2,0) -- (2,0);

\draw  plot[smooth, tension=.7] coordinates {(-2,1.5) (0,1.9) (2,1.5)};
\draw  plot[smooth, tension=.7] coordinates {(-2,1) (0,1.3) (2,1)};
\draw  plot[smooth, tension=.7] coordinates {(-2,0.5) (0,0.65) (2,0.5)};
\draw  plot[smooth, tension=.7] coordinates {(-2,-0.5) (2,-0.5)};
\draw  plot[smooth, tension=.7] coordinates {(-2,-1) (0,-1.3) (2,-1)};
\draw  plot[smooth, tension=.7] coordinates {(-2,-1.5) (0,-1.9) (2,-1.5)};
\draw (-2.5,-4) to (2.5,-4);
\draw[->] (0,-3.2) to (0,-3.6);
\node at (3,-3.8){$\mathbb{A}^1_{\mathbb{C}}$};
\node at (2.9,0){$B_R(0)$};
\node at (0.2,3.2){$\mathbb{A}^n_{\mathbb{C}}$};

\draw  plot[smooth, tension=.7] coordinates {(-0.5,-2.45) (-0.5,2.45)};
\draw  plot[smooth, tension=.7] coordinates {(-1,2.3) (-1,-2.3)};
\draw  plot[smooth, tension=.7] coordinates {(-1.5,2.2) (-1.5,-2.2)};
\draw  plot[smooth, tension=.7] coordinates {(0.5,2.45) (0.5,-2.45)};
\draw  plot[smooth, tension=.7] coordinates {(1,2.3) (1,-2.3)};
\draw  plot[smooth, tension=.7] coordinates {(1.5,2.2) (1.5,-2.2)};
\end{tikzpicture}
\caption{}
\label{product}
\end{figure}

\begin{proof}[Proof of Theorem \ref{Main}]
 The claim that the k-point Seshadri constant is less or equal to the K\"ahler packing constant is 
a direct consequence of Theorem \ref{thmB}. The converse argument is a consequence of the symplectic blow up construction \cite{Polt}. The embedded balls allow us to construct K\"ahler forms  on the blow up 
$\pi$ of the centres whose curvature lies in the first Chern class of $\pi^*L -\sum\gamma E_i$.
\end{proof}

\begin{rem}
The method to prove Theorem \ref{thmB} also allows us to construct K\"ahler packings of balls with radius 
$r_1,\ldots,r_k$ arbitrarily close to $\epsilon_1,\ldots,\epsilon_k$ as long 
as $\pi^*L-\sum_{i=1}^k\epsilon_iE_i$ is nef. 
\end{rem}
As a final remark in \cite{Eckl} it was shown that when $X$ is a toric variety 
and $L$ a toric invariant divisor, the sections that generate the K\"ahler
form also induce a moment map whose image is the well known toric polytope associated to $X$ and $L$. Furthermore we find that the cut off triangles of the polytope are the shadows under the moment map of the glued in balls. In future work I will show that we can generalise this somewhat by replacing the moment polytopes by 
Newton-Okounkov bodies and a non-toric moment map. To illustrate these ideas I will construct examples of $\mathbb{P}^2$ blown up at 2 and 3 points with new shadows.

\bibliographystyle{alpha}
\bibliography{refs}

\end{document}